\renewcommand\leq{\leqslant}
\renewcommand\geq{\geqslant}
\newcommand\be{\begin{equation}}
\newcommand\ee{\end{equation}}
\theoremstyle{plain}
\newtheorem{theorem}{\bf Theorem}[section]
\newtheorem{corollary}{\bf Corollary}[section]
\newtheorem{lemma}{\bf Lemma}[section]
\newtheorem{remark}{\bf Remark}[section]
\newtheorem{definition}{\bf Definition}[section]
\numberwithin{equation}{section}
\title[Imbedding results in Musielak-Orlicz spaces...]{Imbedding results in Musielak-Orlicz spaces with
an application to anisotropic nonlinear Neumann
problems}
\author[A. Youssfi]{Ahmed Youssfi}
\address{University Sidi Mohamed Ben Abdellah,
	National School of Applied Sciences,
	P.O. Box 72 F\`es-Pricipale, Fez, Morocco}
\email{address:ahmed.youssfi@gmail.com ; ahmed.youssfi@usmba.ac.ma}
\author[M. M. Ould Khatri]{Mohamed Mahmoud Ould Khatri}
\address{University Sidi Mohamed Ben Abdellah,
	National School of Applied Sciences,
	P.O. Box 72 F\`es-Pricipale, Fez, Morocco}
\email{mahmoud.ouldkhatri@usmba.ac.ma}
\begin{document}
\maketitle

\begin{abstract}
We prove a continuous embedding that allows us to obtain a boundary trace imbedding result for anisotropic Musielak-Orlicz spaces, which we then apply to obtain an existence result for Neumann problems with
nonlinearities on the boundary associated to some anisotropic nonlinear elliptic equations in Musielak-Orlicz spaces constructed from Musielak-Orlicz functions on which and on their conjugates we do not assume the $\Delta_2$-condition. The uniqueness is also studied.
\end{abstract}


{\small {\bf Key words and phrases:}  Musielak-Orlicz spaces, boundary trace imbedding,  weak solution, minimizers.}

{\small{\bf Mathematics Subject Classification (2010)}: 46E35, 35J20, 35J25, 35B38, 35D30. }

\section{Introduction}\label{1}
\par Let $\Omega$ be an open bounded subset of $\mathbb{R}^{N}$, $(N\geq2)$. We denote by
$\vec{\phi}:\Omega\times\mathbb{R}^{+}\to\mathbb{R}^{N}$ the vector function
$\vec{\phi}=(\phi_{1},\cdots,\phi_{N})$
where for every $i\in\{1,\cdots,N\}$, $\phi_{i}$ is a Musielak-Orlicz function derivable with respect to its second argument whose
complementary Musielak-Orlicz
function is denoted by $\phi_{i}^{\ast}$ (see preliminaries).
We consider the follwing problem
\begin{equation}\label{p1}
\left\{
\begin{array}{rll}\displaystyle
-\sum_{i=1}^{N}\partial_{x_{i}}a_{i}(x,\partial_{x_{i}}u)
+b(x)\varphi_{max}
(x,|u(x)|)&=f(x,u)&\mbox{ in }\Omega,\\
u&\geq0&\mbox{ in }\Omega,\\\displaystyle
\sum_{i=1}^{N}a_{i}(x,\partial_{x_{i}}u)\nu_{i}&=g(x,u)&\mbox{ on }\partial\Omega,
\end{array}
\right.
\end{equation}
where $\partial_{x_{i}}=\frac{\partial}{\partial_{x_{i}}}$ and for every $i=1,\cdots,N$, we denote by $\nu_{i}$
the $i^{th}$ component of the outer normal unit vector and $a_{i}:\Omega\times\mathbb{R}\rightarrow\mathbb{R}$ is a Carath\'{e}odory
function such that there exist a locally integrable (see Definition \ref{dfn1.1}) Musielak-Orlicz function $P_{i}$ with $P_{i}\ll\phi_{i}$ (see \eqref{grows}), a positive constant $c_{i}$ and a
nonnegative function $d_{i}\in E_{\phi_{i}^{\ast}}
(\Omega)$ satisfying for all $s$, $t\in\mathbb{R}$ and for almost every $x\in\Omega$ the following assumptions
\begin{equation}\label{a1}
|a_{i}(x,s)|\leq c_{i}[d_{i}(x)+(\phi_{i}^{\ast})^{-1}(x,P_{i}(x,s))],
\end{equation}
\begin{equation}\label{a2}
\phi_{i}(x,|s|)\leq a_{i}(x,s)s\leq A_{i}(x,s),
\end{equation}
\begin{equation}\label{a3}
\big(a_{i}(x,s)-a_{i}(x,t)\big)\cdot\big(s-t\big)>0, \mbox{ for all }s\neq t,
\end{equation}
the function $A_{i}:\Omega\times\mathbb{R}\rightarrow\mathbb{R}$ is defined by
$$
A_{i}(x,s)=\int_{0}^{s}a_{i}(x,t)dt.
$$
Here and in what follows, we define the functions $\phi_{min}$ and $\phi_{max}$ as follows
$$
\phi_{min}(x,s)=\min_{i=1,\cdots,N}\phi_{i}(x,s)\mbox{ and }
\phi_{max}(x,s)=\max_{i=1,\cdots,N}\phi_{i}(x,s).
$$
Let us denote $\varphi_{max}(x,y)=\frac{\partial\phi_{max}}{\partial y}(x,y)$.
We also assume that there exist a locally integrable Musielak-Orlicz function $R$ with $R\ll\phi_{max}$ and a nonnegative function $D\in E_{\phi^{\ast}_{max}}(\Omega)$, such that for all $s$, $t\in\mathbb{R}$ and for almost every $x\in\Omega$
\begin{equation}\label{phi.max1}
|\varphi_{max}(x,s)|\leq D(x)+(\phi_{max}^{\ast})^{-1}(x,R(x,s)),
\end{equation}
where $\phi_{max}^{\ast}$ stands for the complementary function of $\phi_{max}$ defined below in \eqref{compl1}.
\par For what concerns the data, we suppose that   $f:\Omega\times\mathbb{R}\rightarrow\mathbb{R}^+$ and
$g:\partial\Omega\times\mathbb{R}\rightarrow\mathbb{R}$ are  Carath\'{e}odory functions. We define the
antiderivatives
$F:\Omega\times\mathbb{R}\rightarrow\mathbb{R}$ and $G:\partial\Omega\times\mathbb{R}\rightarrow\mathbb{R}$
of $f$ and $g$ respectively by
\begin{equation*}\label{f}
F(x,s)=\int_{0}^{s}f(x,t)dt
\mbox{ and }
G(x,s)=\int_{0}^{s}g(x,t)dt.
\end{equation*}
We say that a Musielak-Orlicz function $\phi$ satisfies the $\Delta_{2}$-condition, if there exists a positive constant $k>0$ and
a nonnegative function $h\in L^{1}(\Omega)$ such that
$$
\phi(x,2t)\leq k\phi(x,t)+h(x).
$$
We remark that the $\Delta_{2}$-condition is equivalent to the following condition:
for all $\alpha>1$ there exists a positive constant $C_{\alpha}>0$ and
a nonnegative function $h_{\alpha}\in L^{1}(\Omega)$ such that
$$
\phi(x,\alpha t)\leq C_{\alpha}\phi(x,t)+h_{\alpha}(x).
$$
We assume now that there exist two positive constants $k_{1}$ and $k_{2}$ and two locally integrable Musielak-Orlicz functions
$M$ and $H$ satisfy the $\Delta_{2}$-condition and derivable with respect to their second arguments with $M\ll\phi_{min}^{\ast\ast}$,
$H\ll\phi_{min}^{\ast\ast}$ and $H\ll\psi_{min}$,
such that the functions $f$ and $g$ satisfy for all $s\in\mathbb{R}_{+}$ the following assumptions
\begin{equation}\label{F}
|f(x,s)|\leq k_{1}m(x,s),\mbox{ for a.e. }x\in\Omega,
\end{equation}
\begin{equation}\label{G}
|g(x,s)|\leq k_{2}
h(x,s),\mbox{ for a.e. }x\in\partial\Omega,
\end{equation}
where
\begin{equation}\label{psi}
\psi_{min}(x,t)=[(\phi_{min}^{\ast\ast})_{\ast}(x,t)]^{\frac{N-1}{N}},\; m(x,s)=\frac{\partial M(x,s)}{\partial s} \mbox{ and }
h(x,s)=\frac{\partial H(x,s)}{\partial s}.
\end{equation}
Finally, for the function $b$ involved in \eqref{p1}, we assume that there exists a constant
$b_{0}>0$ such that $b$ satisfies the hypothesis
\begin{equation}\label{b}
b\in L^{\infty}(\Omega)\mbox{ and } b(x)\geq b_{0},\mbox{ for a.e. }x\in\Omega.
\end{equation}
We remark that \eqref{a3} and the relation $a_{i}(x,\zeta)=\nabla_{\zeta}
A_{i}(x,\zeta)$ imply in particular that for any $i=1,\cdots,N$, the function $\zeta\to A_{i}(\cdot,\zeta)$ is convex.
\par Let us put ourselves in the particular case of $\vec{\phi}=(\phi_{i})_{i\in\{1,\cdots,N\}}$ where for $i\in\{1,\cdots,N\}$, $\phi_{i}(x,t)=|t|^{p_{i}(x)}$ with $p_{i}\in C_{+}(\bar{\Omega})=\{h\in C(\bar{\Omega}):\inf_{x\in\Omega}h(x)>1\}$. Defining $p_{max}(x)=\max_{i\in\{1,\cdots,N\}}p_{i}(x)$ and $p_{min}(x)=\min_{i\in\{1,\cdots,N\}}p_{i}(x)$, one has
$\phi_{max}(x,t)=|t|^{p_{M}(x)}$ and therefore $\varphi_{max}(x,t)=p_{M}(x)|t|^{p_{M}(x)-2}t$,
where $p_{M}$ is $p_{max}$ or $p_{min}$ according to whether $|t|\geq1$ or $|t|\leq1$. Therefore, the problem \eqref{p1}
can be rewritten as follows
\begin{equation}\label{p2}
\left\{
\begin{array}{rll}\displaystyle
-\sum_{i=1}^{N}\partial_{x_{i}}a_{i}(x,\partial_{x_{i}}u)+b_{1}(x)|u|^{p_{M}(x)-2}u
&=f(x,u)&\mbox{ in }\Omega,\\
u&\geq0&\mbox{ in } \Omega,\\\displaystyle
\sum_{i=1}^{N}a_{i}(x,\partial_{x_{i}}u)\nu_{i}&=g(x,u)&\mbox{ on } \partial\Omega,
\end{array}%
\right.
\end{equation}
where $b_{1}(x)=p_{M}(x)b(x)$.
Boureanu and R\v{a}dulescu \cite{Rad} have proved the existence and uniqueness of the weak solution of \eqref{p2}.
They prove an imbedding and a trace results which they use together with a classical minimization existence result for
functional reflexive
framework (see \cite[Theorem 1.2]{Struwe}). Problem \eqref{p2} with Dirichlet boundary condition and
$b_{1}(x)=0$ was treated in \cite{Kone}. The authors proved that if $f(\cdot,u)=f(\cdot)\in L^{\infty}(\Omega)$ then
\eqref{p2} admits a unique solution by using \cite[Theorem 1.2]{Struwe}.\\
Let us mention some related results in the framework of Orlicz-Sobolev spaces. Le and Schmitt \cite{Smitt} proved an
existence result for the following boundary value problem
\begin{equation*}
\left\{
\begin{array}{rll}
-\mbox{div}(A(|\nabla u|^{2})\nabla u)+F(x,u)=0,
&\mbox{ in } \Omega,\\
u=0&\mbox{ on } \partial\Omega,
\end{array}%
\right.
\end{equation*}
in $W_{0}^{1}L_{\phi}(\Omega)$ where $\phi(s)=A(|s|^{2})s$ and $F$ is a Carath\'{e}odory function satisfying some growth conditions. This result extends the one obtained in \cite{Garcia} with
$F(x,u)=-\lambda\psi(u)$, where $\psi$ is an odd increasing homeomorphism of $\mathbb{R}$ onto $\mathbb{R}$. In \cite{Garcia,Smitt} the authors assume that the $N$-function $\overline{\phi}$ complementary to the $N$-function $\phi$ satisfies
the $\Delta_{2}$ condition, which is used to prove that the functional $u\to \int_{\Omega}\Phi(|\nabla u|)dx$ is coercive and of class $\mathcal{C}^1$, where $\Phi$ is the antiderivative of $\phi$ vanishing at origin.
\par Here, we are interested in proving the existence and uniqueness of the weak solution for problem \eqref{p1} without any additional condition on the
Musielak-Orlicz function $\phi_{i}$ or its complementary $\overline{\phi}_{i}$ for $i=1,\cdots,N.$ Thus, the Musielak-Orlicz spaces $L_{\phi_{i}}(\Omega)$
are neither reflexive nor separable and hence classical existence results can not be applied.
\par The approach we use consists in proving first a continuous imbedding and a trace result which we then apply to solve problem \eqref{p1}. The results we prove here extend to the anisotropic Musielak-Orlicz-Sobolev spaces the continuous imbedding result obtained in \cite{Fan} under extra conditions and the trace result proved in \cite{LWZ}. To the best of our knowledge, the trace result we obtain here is new and does not exist in the literature. The main difficulty we found when we deal with problem \eqref{p1} is the coercivity of the energy functional. We overcome this by using both our continuous imbedding and  trace results. Then we prove the boundedness of a minimization sequence and by a compactness argument, we are led to obtain a minimizer which is a weak solution of problem \eqref{p1}.
\begin{definition}\label{dfn1.1}
Let $\Omega$ be an open subset of $\mathbb{R}^{N}$, ($N\geq2$). We say that a Musielak-Orlicz function $\phi$ is locally-integrable, if for every compact subset $K$ of $\Omega$ and every constant $c>0$,
$$
\int_{K}\phi(x,c)dx<\infty.
$$
\end{definition}
The paper is organized as follows : Section \ref{2} contains some definitions. In Section \ref{3},
we give and prove our main  results, which we then apply in Section \ref{4} to solve problem \eqref{p1}. In the last section
we give the appendix which contains some important lemmas that are necessary for the accomplishment of the proofs of the
results.
\section{Preliminaries}\label{2}
\subsection{\textbf{Anisotropic Musielak-Orlicz-Sobolev spaces}}
Let $\Omega$ be an open subset of $\mathbb{R}^{N}$. A real function $\phi:\Omega\times\mathbb{R}^{+}\rightarrow\mathbb{R}^{+}$, will be called a Musielak-Orlicz function, if it satisfies the following conditions:
\begin{itemize}
\item [(i)] $\phi(\cdot,t)$ is a measurable function on $\Omega$.
\item [(ii)] $\phi(x,\cdot)$ is an $N$-function, that is a convex and nondecreasing  function with $\inf_{x\in\Omega}\phi(x,1)>0$, $\phi(x,t)=0$ if only
if $t=0$, $\phi(x,t)>0$ for all $t>0$ and for almost every $x\in \Omega$,
$$
\lim_{t\rightarrow\infty}\frac{\phi(x,t)}{t}=\infty\mbox{ and }
\lim_{t\rightarrow0}\frac{\phi(x,t)}{t}=0.
$$
\end{itemize}
We will extend these Musielak-Orlicz functions into even functions on all $\Omega\times\mathbb{R}$.
The complementary function $\phi^{\ast}$ of the Musilek-Orlicz function $\phi$ is defined by
\begin{equation}\label{compl1}
\phi^{\ast}(x,s)=\sup_{t\geq0}\{st-\phi(x,t)\}.
\end{equation}
It can be checked that $\phi^{\ast}$ is also a Musielak-Orlicz function (see \cite{Mus}). Moreover, for every $t$, $s\geq0$ and a.e.
$x\in\Omega$ we have the so-called Young inequality (see \cite{Mus})
$$
ts\leq\phi(x,t)+\phi^{\ast}(x,s).
$$
\par For any function $h:\mathbb{R}\to\mathbb{R}$ the second complementary function $h^{\ast\ast}=(h^{\ast})^{\ast}$ (cf. \eqref{compl1}), is convex and satisfies
\begin{equation}\label{astast}
h^{\ast\ast}(x)\leq h(x),
\end{equation}
with equality when $h$ is convex. Roughly speaking, $h^{\ast\ast}$ is a convex envelope of $h$, that is the biggest convex function smaller or equal to $h$.
\par Let $\phi$ and $\psi$ be two Musielak-Orlicz functions. We say that $\psi$ grows essentially more slowly than $\phi$, denote $\psi\ll\phi$, if
\begin{equation}\label{grows}
\displaystyle\lim_{t\to+\infty}\frac{\psi(x,t)}{\phi(x,ct)}=0,
\end{equation}
for every constant $c>0$ and for almost every $x\in\Omega$. We remark that for a locally-integrable Musielak function $\psi$, if $\psi\ll\phi$ then for all $c>0$ there exists a nonnegative integrable function $h$, depending on $c$, such that
$$
\psi(x,t)\leq\phi(x,ct)+h(x), \mbox{ for all }t\in\mathbb{R} \mbox{ and for a.e. }x\in\Omega.
$$
The Musielak-Orlicz space $L_{\phi}(\Omega)$ is defined by
$$
L_{\phi}(\Omega)=\Big\{u:\Omega\rightarrow\mathbb{R}\mbox{ measurable }
/\int_{\Omega}\phi\Big(x,\frac{u(x)}{\lambda}\Big)<+\infty\mbox{ for some }
\lambda>0\Big\}.
$$
Endowed with the so-called Luxemborg norm
\begin{equation*}
\|u\|_{\phi}=\inf\Big\{\lambda>0/\int_{\Omega}\phi\Big(x,\frac{u(x)}
{\lambda}\Big)dx\leq1\Big\},
\end{equation*}
$(L_{\phi}(\Omega),\|\cdot\|_{\phi})$ is a Banach space.
Observe that if $\Omega$ is of finite measure, from the fact that $\displaystyle\inf_{x\in\Omega}\phi(x,1)>0$ follows the following continuous imbedding (see \cite[Lemma 1]{Hudzik})
\begin{equation}\label{imbd.l1}
L_{\phi}(\Omega)\hookrightarrow L^{1}(\Omega).
\end{equation}
This will be also the case if instead of $\displaystyle\inf_{x\in\Omega}\phi(x,1)>0$ we assume $\displaystyle\lim_{t\rightarrow\infty}\inf_{x\in\Omega}\frac{\phi(x,t)}{t}=\infty$.
We will also use the space $E_{\phi}(\Omega)$ defined by
$$
E_{\phi}(\Omega)=\Big\{u:\Omega\rightarrow\mathbb{R}\mbox{ measurable }
/\int_{\Omega}\phi\Big(x,\frac{u(x)}{\lambda}\Big)<+\infty\mbox{ for all }
\lambda>0\Big\}.
$$
Observe that for every $u\in L_{\phi}(\Omega)$ the following inequality holds
\begin{equation}\label{norm.modular}
\|u\|_{\phi}\leq\int_{\Omega}\phi(x,u(x))dx+1.
\end{equation}
For two complementary Musielak-Orlicz functions $\phi$ and $\phi^{\ast}$, the following H\"{o}lder's
inequality (see \cite{Mus})
\begin{equation}\label{Holder}
\int_{\Omega}|u(x)v(x)|dx\leq2\|u\|_{\phi}\|v\|_{\phi^{\ast}}
\end{equation}
holds for every $u\in L_{\phi}(\Omega)$ and $v\in L_{\phi^{\ast}}(\Omega)$.
Define $\phi^{\ast-1}$ for every $s\geq0$ by
$$
\phi^{\ast-1}(x,s)=\sup\{\tau\geq0:\phi^{\ast}(x,\tau)\leq s\}.
$$
Then, for almost every $x\in\Omega$ and for every $s\in\mathbb{R}$ we have
\begin{equation}\label{prop1}
\phi^{\ast}(x,\phi^{\ast-1}(x,s))\leq s,
\end{equation}
\begin{equation}\label{prop2}
s\leq\phi^{\ast-1}(x,s)\phi^{-1}(x,s)\leq 2s,
\end{equation}
\begin{equation}\label{prop3}
\phi(x,s)\leq s\frac{\partial\phi(x,s)}{\partial s}\leq\phi(x,2s).
\end{equation}
\begin{definition}
	Let $\vec{\phi}:\Omega\times\mathbb{R}_{+}\longrightarrow\mathbb{R}^{N}$, the vector function
	$\vec{\phi}=(\phi_{1},\cdots,\phi_{N})$
	where for every $i\in\{1,\cdots,N\}$, $\phi_{i}$ is a Musielak-Orlicz function.
	We define the anisotropic Musielak-Orlicz-Sobolev space by
	\begin{equation*}
	W^{1}L_{\vec{\phi}}(\Omega)=\Big\{u\in L_{\phi_{max}}(\Omega);\;
	\partial_{x_{i}}u\in L_{\phi_{i}}(\Omega)\mbox{ for all }i=1,\cdot\cdot\cdot,N\Big\}.
	\end{equation*}
\end{definition}
By the continuous imbedding \eqref{imbd.l1}, we get that $W^{1}L_{\vec{\phi}}(\Omega)$
is a Banach space with respect to the following norm
\begin{equation*}
\|u\|_{W^{1}L_{\vec{\phi}}(\Omega)}=\|u\|_{\phi_{max}}+\sum_{i=1}^{N}\|\partial_{x_{i}}u\|_{\phi_{i}}.
\end{equation*}
Moreover, in view of \cite[Lemma 1]{Hudzik} we have the continuous embedding 
$W^{1}L_{\vec{\phi}}(\Omega)\hookrightarrow W^{1,1}(\Omega)$.
\section{Main results}\label{3}
In this section we prove an imbedding theorem and a trace result. Let us assume the following conditions
\begin{equation}\label{phi.min3}
\int^{1}_{0}\frac{(\phi_{min}^{\ast\ast})^{-1}(x,t)}{t^{1+\frac{1}{N}}}dt<+\infty\mbox{ and }
\int^{+\infty}_{1}\frac{(\phi_{min}^{\ast\ast})^{-1}(x,t)}{t^{1+\frac{1}{N}}}dt
=+\infty,\;\; \forall x\in\overline{\Omega}.
\end{equation}
Thus, we define the Sobolev conjugate $(\phi_{min}^{\ast\ast})_\ast$
of $\phi_{min}^{\ast\ast}$ by
\begin{equation}\label{phi.min3.1}
(\phi_{min}^{\ast\ast})_{\ast}^{-1}(x,s)=\int^{s}_{0}\frac{(\phi_{min}^{\ast\ast})^{-1}(x,t)}
{t^{1+\frac{1}{N}}}dt,\; \mbox{ for } x\in\overline{\Omega} \mbox{ and } s\in [0,+\infty).
\end{equation}
It may readily be checked that $(\phi_{min}^{\ast\ast})_\ast$ is an $N$-function.
We assume that there exist two positive constants $\nu<\frac{1}{N}$ and $c_{0}$, such that
\begin{equation}\label{phi.min4}
\Big|\frac{\partial(\phi_{min}^{\ast\ast})_\ast(x,t)}{\partial x_{i}}\Big|
\leq c_{0}\Big[(\phi_{min}^{\ast\ast})_\ast(x,t)+((\phi_{min}^{\ast\ast})_\ast(x,t))^{1+\nu}\Big],
\end{equation}
for all $t\in\mathbb{R}$ and for almost every $x\in\Omega$, provided that for every $i=1,\cdots,N$ the derivative
$\frac{\partial(\phi_{min}^{\ast\ast})_\ast(x,t)}{\partial x_{i}}$ exists.
\subsection{An imbedding theorem}
\begin{theorem}\label{thm1}
	Let $\Omega$ be an open bounded subset of $\mathbb{R}^{N}$, $(N\geq2)$, with the cone property. Assume that \eqref{phi.min3} and \eqref{phi.min4} are fulfilled, $(\phi_{min}^{\ast\ast})_\ast(\cdot,t)$ is Lipschitz continuous on $\overline{\Omega}$ and $\phi_{max}$ is locally integrable. Then, there is a continuous embedding
	$$
	W^{1}L_{\vec{\phi}}(\Omega)\hookrightarrow L_{(\phi_{min}^{\ast\ast})_\ast}(\Omega).
	$$
\end{theorem}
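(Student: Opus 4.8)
The plan is to reduce the anisotropic statement to an isotropic Musielak--Orlicz--Sobolev embedding, and then to establish that one by a Riesz--potential estimate in which the $x$-dependence of the target $N$-function is handled by freezing the spatial variable.

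\textbf{Step 1: reduction to the isotropic space.} Write $\Psi:=\phi_{min}^{\ast\ast}$ and $\Psi_\ast:=(\phi_{min}^{\ast\ast})_\ast$. From \eqref{astast} and $\phi_{min}=\min_i\phi_i$ we get $\Psi(x,t)\leq\phi_{min}(x,t)\leq\phi_i(x,t)$ and $\Psi(x,t)\leq\phi_{min}(x,t)\leq\phi_{max}(x,t)$ for every $i$, all $t\geq0$ and a.e. $x\in\Omega$. Hence $\int_\Omega\phi_i(x,w/\lambda)\,dx\leq1$ forces $\int_\Omega\Psi(x,w/\lambda)\,dx\leq1$, so $\|w\|_{\Psi}\leq\|w\|_{\phi_i}$, and similarly $\|w\|_{\Psi}\leq\|w\|_{\phi_{max}}$. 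Applying this with $w=u$ and $w=\partial_{x_i}u$ shows every $u\in W^{1}L_{\vec{\phi}}(\Omega)$ belongs to the isotropic space $W^{1}L_{\Psi}(\Omega):=\{v\in L_{\Psi}(\Omega):\partial_{x_i}v\in L_{\Psi}(\Omega),\ i=1,\dots,N\}$ with
\[
\|u\|_{\Psi}+\sum_{i=1}^{N}\|\partial_{x_i}u\|_{\Psi}\ \leq\ \|u\|_{\phi_{max}}+\sum_{i=1}^{N}\|\partial_{x_i}u\|_{\phi_i}\ =\ \|u\|_{W^{1}L_{\vec{\phi}}(\Omega)}.
\]
Since the conjugate $\Psi_\ast$ in \eqref{phi.min3.1} depends only on $\Psi$, it suffices to prove $W^{1}L_{\Psi}(\Omega)\hookrightarrow L_{\Psi_\ast}(\Omega)$.

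\textbf{Step 2: Sobolev representation.} By \eqref{imbd.l1}, $u\in L_{\phi_{max}}(\Omega)\hookrightarrow L^{1}(\Omega)$ and $\partial_{x_i}u\in L_{\phi_i}(\Omega)\hookrightarrow L^{1}(\Omega)$, so $u\in W^{1,1}(\Omega)$ with $\|u\|_{L^{1}(\Omega)}\leq C\|u\|_{W^{1}L_{\vec{\phi}}(\Omega)}$; the local integrability of $\phi_{max}$ also ensures that bounded functions belong to $W^{1}L_{\vec{\phi}}(\Omega)$, which will be convenient for the approximation arguments in Section~\ref{4}. Because $\Omega$ has the cone property, the classical pointwise estimate for such domains provides $\rho>0$ and $C=C(\Omega)$ with
\[
|u(x)|\ \leq\ C\int_{\Omega\cap B(x,\rho)}\frac{|\nabla u(y)|}{|x-y|^{N-1}}\,dy\ +\ C\int_{\Omega}|u(y)|\,dy\qquad\text{for a.e. } x\in\Omega.
\]
Since $|\nabla u|\leq\sum_{i}|\partial_{x_i}u|\in L_{\Psi}(\Omega)$ and the second term is bounded by $C\|u\|_{W^{1}L_{\vec{\phi}}(\Omega)}$, matters are reduced to estimating the $L_{\Psi_\ast}$-norm of the truncated Riesz potential $I_\rho v(x):=\int_{\Omega\cap B(x,\rho)}|x-y|^{-(N-1)}v(y)\,dy$ for $v\in L_{\Psi}(\Omega)$, $v\geq0$.

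\textbf{Step 3: the potential estimate (the main obstacle).} The heart of the proof is the inequality $\|I_\rho v\|_{\Psi_\ast}\leq C\big(1+\|v\|_{\Psi}\big)$, equivalently $\int_{\Omega}\Psi_\ast(x,I_\rho v(x)/\mu)\,dx\leq1$ for $\mu$ comparable to $1+\|v\|_{\Psi}$. In the $x$-independent Orlicz case this is the O'Neil--Adams estimate, whose proof rests precisely on the defining formula $\Psi_\ast^{-1}(x,s)=\int_{0}^{s}\Psi^{-1}(x,t)\,t^{-1-1/N}\,dt$ and the convergence/divergence conditions \eqref{phi.min3}. Here $\Psi_\ast(\cdot,t)$ genuinely varies with $x$, so one covers $\overline{\Omega}$ by finitely many balls of radius $<\rho$ and, on each ball, replaces $\Psi_\ast(x,t)$ by its value at the centre. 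Controlling this replacement is exactly the role of \eqref{phi.min4} together with the Lipschitz continuity of $\Psi_\ast(\cdot,t)$ on $\overline{\Omega}$: integrating the differential inequality \eqref{phi.min4} along segments — an elementary Gr\"onwall-type comparison — shows that $\Psi_\ast(x,t)$ and $\Psi_\ast(x',t)$ differ only by a multiplicative constant plus a term of order $\big(\Psi_\ast(x',t)\big)^{1+\nu}$ when $x,x'$ lie in the same ball, and the restriction $\nu<\frac1N$ makes this superlinear correction subcritical for the Sobolev conjugate, so that after the potential is applied it is absorbed into the left-hand side. The remaining argument reduces, ball by ball, to the classical estimate and is summed over the finite cover; the supporting one-variable and Riesz-potential lemmas are collected in the appendix. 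I expect this freezing/absorption step to be the only genuinely delicate point.

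\textbf{Conclusion.} Combining Steps 2 and 3 yields a constant $C$ with $\|u\|_{\Psi_\ast}\leq C\|u\|_{W^{1}L_{\vec{\phi}}(\Omega)}$ for every $u\in W^{1}L_{\vec{\phi}}(\Omega)$, which is the asserted continuous embedding $W^{1}L_{\vec{\phi}}(\Omega)\hookrightarrow L_{(\phi_{min}^{\ast\ast})_\ast}(\Omega)$.
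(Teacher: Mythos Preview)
Your strategy diverges from the paper's, and Step~3 --- which carries the entire content --- is not a proof but a hope. The paper never touches Riesz potentials or freezing. Instead, for bounded $u\neq0$ it chooses $\lambda$ so that $\int_\Omega\Psi_\ast(x,u/\lambda)\,dx=1$ (hence $\lambda=\|u\|_{\Psi_\ast}$), sets $h(x)=\big[\Psi_\ast(x,u(x)/\lambda)\big]^{(N-1)/N}$, and shows $h\in W^{1,1}(\Omega)$ with $\|h\|_{W^{1,1}}\leq \frac{1}{2c_\ast}+\frac{C}{\lambda}\|u\|_{W^{1}L_{\vec{\phi}}}$. Then the classical embedding $W^{1,1}(\Omega)\hookrightarrow L^{N/(N-1)}(\Omega)$ and the identity $\|h\|_{L^{N/(N-1)}}=1$ give $\lambda\leq C\|u\|_{W^{1}L_{\vec{\phi}}}$. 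The chain rule (Lemma~\ref{lem.impo5}) splits $\partial_{x_i}h$ into a term with $\partial_t\Psi_\ast\cdot\partial_{x_i}u$ and a term with $\partial_{x_i}\Psi_\ast$. The first is controlled by the differential identity $(\Psi^{-1}\circ\Psi_\ast)\,\partial_t\Psi_\ast=\Psi_\ast^{(N+1)/N}$ coming directly from \eqref{phi.min3.1}, together with \eqref{prop2} and H\"older. The second is where \eqref{phi.min4} enters: it yields $|\partial_{x_i}\Psi_\ast|\leq c_0(\Psi_\ast+\Psi_\ast^{1+\nu})$, so the contribution to $|\partial_{x_i}h|$ is $\lesssim \Psi_\ast^{1-1/N}+\Psi_\ast^{1-1/N+\nu}$, and since $\nu<1/N$ both exponents are strictly below $1$ and Lemma~\ref{lem.impo3} absorbs them into $\varepsilon\Psi_\ast+K_0|u|/\lambda$. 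Unbounded $u$ is handled by truncation.

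Your freezing mechanism, as written, does not work. Setting $f(s)=\Psi_\ast(x+s(x'-x),t)$, condition \eqref{phi.min4} gives $|f'(s)|\leq c_0|x-x'|\big(f(s)+f(s)^{1+\nu}\big)$, a Bernoulli-type inequality with a superlinear term; it does \emph{not} integrate to a bound of the form $f(1)\leq Cf(0)+Cf(0)^{1+\nu}$ without a priori control on $f$, because solutions of such inequalities can blow up on any fixed interval once $f$ is large. Concretely, in the model $\Psi_\ast(x,t)=c(x)|t|^{p_\ast(x)}$ one has $\Psi_\ast(x,t)/\Psi_\ast(x',t)\sim |t|^{p_\ast(x)-p_\ast(x')}$, which is unbounded in $t$ for $x\neq x'$, so no pointwise comparison of the kind you assert can hold. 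Thus the ``Gr\"onwall + freeze + classical O'Neil--Adams + absorb'' chain is broken at its first link, and the subsequent absorption step is never made precise either. The role of $\nu<1/N$ in the paper is not to make a spatial comparison subcritical, but to keep the exponent $1-\tfrac1N+\nu$ of $\Psi_\ast$ below $1$ after the chain rule; you have misidentified what \eqref{phi.min4} is for.
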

\begin{remark}\label{remark1}~
	\begin{enumerate}
        \item In the case where for all $i=1,\cdots,N$, $\phi_{i}(x,t)=\phi(x,t)=|t|^{p(x)},$ then $\phi_{min}^{\ast\ast}(x,s)=\phi_{min}(x,t)=\phi(x,t)$ and so
            $(\phi_{min}^{\ast\ast})_{\ast}(x,s)=(\phi_{min})_{\ast}(x,t)=\phi_{\ast}(x,t)$.
		\item If we consider $\phi(x,t)=|t|^{p(x)}$ with $\phi(\cdot,t)$ is lipschitz continuous on $\overline{\Omega}$. Then the function $p(\cdot)$ is Lipschitz continuous on $\overline{\Omega}$. Indeed,
		we have that $\phi(x,\cdot)$ is locally Lipschitz continuous on $\mathbb{R}$ (because $\phi(x,\cdot)$ is convex on $\mathbb{R}$), let $t\in[a,b]$ with $t\neq1$ and $a,b\in\mathbb{R}^{+}$ with $a>0$. We can write
		$$
		|p(x)-p(y)|=\frac{1}{|\log |t||}\Big|\log(|t|^{p(x)})-
		\log(|t|^{p(y)})\Big|.
		$$
		Being the function $\log(\cdot)$ locally Lipschitz continuous on $\mathbb{R}^{+}_{\ast}$, there exists a constant $k>0$ satisfying
		\begin{equation}\label{Proof.remark1}
		|p(x)-p(y)|\leq k||t|^{p(x)}-|t|^{p(y)}|=k|\phi(x,t)-\phi(y,t)|.
		\end{equation}
		So it follows that $p(\cdot)$ is Lipschitz continuous on $\overline{\Omega}$.
		
		\item If $\phi(x,t)=|t|^{p(x)}$, then denoting $p_{\ast}(x)=\frac{Np(x)}{N-p(x)}$ its Sobolev conjugate is given by $\phi_\ast(x,t)=\Big(\frac{1}{p_{\ast}(x)}\Big)^{p_{\ast}(x)}|t|^{p_{\ast}(x)}$, provided that $p(x)\leq p_{+}=\sup_{x\in\overline{\Omega}}p(x)<N$. Therefore,
		if $\phi_\ast(\cdot,t)$ is lipschitz continuous on $\overline{\Omega}$, then so is $p(\cdot)$ and $\phi_{\ast}$ satisfies \eqref{phi.min4}. Indeed, for every $x$, $y\in\overline{\Omega}$ we have
		$$
		|t^{p_{\ast}(x)}-t^{p_{\ast}(y)}|=(p_{\ast}(x))^{p_{\ast}(x)}\Big|\frac{t^{p_{\ast}(x)}}{(p_{\ast}(x))^{p_{\ast}(x)}}-
		\frac{t^{p_{\ast}(y)}}{(p_{\ast}(x))^{p_{\ast}(x)}}\Big|.
		$$
		We can assume without loss of generality that $p_{\ast}(y)\leq p_{\ast}(x)$, so that since
		$(p_{\ast}(x))^{p_{\ast}(x)}\leq\Big(\frac{N^{2}}{N-p_{+}}\Big)^{\frac{N^{2}}{N-p_{+}}}$ one has
		$$
		\begin{array}{lll}\displaystyle
		|t^{p_{\ast}(x)}-t^{p_{\ast}(y)}|\leq\\
		\Big(\frac{N^{2}}{N-p_{+}}\Big)^{\frac{N^{2}}{N-p_{+}}}
		\Big|\frac{t^{p_{\ast}(x)}}{(p_{\ast}(x))^{p_{\ast}(x)}}-
		\frac{t^{p_{\ast}(y)}}{(p_{\ast}(y))^{p_{\ast}(y)}}\Big|=\Big(\frac{N^{2}}{N-p_{+}}\Big)^{\frac{N^{2}}{N-p_{+}}}
		|\phi_{\ast}(x,t)-\phi_{\ast}(y,t)|.
		\end{array}
		$$
		It follows that the function $(\cdot,t)\to t^{p_{\ast}(\cdot)}$ is Lipschitz continuous on $\overline{\Omega}$. Therefore by \eqref{Proof.remark1}, $p_{\ast}(\cdot)$ is Lipschitz continuous on $\overline{\Omega}$. On the other hand, for every $x$, $y\in\overline{\Omega}$ we can assume $p(y)\leq p(x)$. So that we can write
		$$
		|p(x)-p(y)|\leq\frac{N}{N-p(x)}|p(x)-p(y)|\leq \Big|\frac{Np(x)}{N-p(x)}-\frac{Np(y)}{N-p(y)}\Big|.
		$$
		Hence, since $p_{\ast}(\cdot)$ is Lipschitz continuous on $\overline{\Omega}$ then so is $p(\cdot)$. Now we shall prove that $\phi_{\ast}$ satisfies \eqref{phi.min4}. For every $t\in\mathbb{R}$ and for almost every $x\in\Omega$ we have
		$$
		\frac{\partial\phi_{\ast}(x,t)}{\partial x_{i}}=\frac{\partial p_{\ast}(x)}{\partial x_{i}}\log\Big(
		\frac{|t|}{ep_{\ast}(x)} \Big)
		\phi_{\ast}(x,t).
		$$	
		As $p_{\ast}(\cdot)>1$ we get
		$$
		\Big|\frac{\partial\phi_{\ast}(x,t)}{\partial x_{i}}\Big|\leq \Big|\frac{\partial p_{\ast}(x)}{\partial x_{i}}\Big|\Big|\log\Big(
		\frac{|t|}{e} \Big)\Big|
		\phi_{\ast}(x,t)
		$$
		Let $\epsilon>0$. If $\epsilon<\frac{1}{N}$ then for all $t>0$
		we can easily check  that
		$$
		\log(t)\leq \frac{1}{\epsilon^2Ne}t^\epsilon.
		$$
		Now, since the Musielak-Orlicz function $\phi_{\ast}$ has a superlinear growth, we can choose $A>0$ for which there exists $t_0>e$ such that $A|t|\leq \phi_{\ast}(x,t)$ whenever $|t|\geq t_0$. Thus, \\
		$\bullet$ If $|t|\geq t_0$ then
		$$
		\Big|\frac{\partial\phi_{\ast}(x,t)}{\partial x_{i}}\Big|\leq\Big|\frac{\partial p_{\ast}(x)}{\partial x_{i}}\Big|
		\frac{1}{\epsilon^2Ne^{1+\epsilon}}|t|^\epsilon
		\phi_{\ast}(x,t)
		$$
		Since $p_{\ast}(\cdot)$ is Lipschitz continuous on $\overline{\Omega}$ there exists a constant $C>0$ satisfying $\Big|\frac{\partial p_{\ast}(x)}{\partial x_{i}}\Big|\leq C$. Hence,
		$$
		\Big|\frac{\partial\phi_{\ast}(x,t)}{\partial x_{i}}\Big|\leq
		\frac{C}{\epsilon^2Ne^{1+\epsilon}A^\epsilon}
		{\phi_{\ast}}^{1+\epsilon}(x,t).
		$$
		$\bullet$ If $|t|\leq t_0$ then
		$$
		\Big|\frac{\partial\phi_{\ast}(x,t)}{\partial x_{i}}\Big|\leq C|\log\Big(
		\frac{t_0}{e}\Big)\phi_{\ast}(x,t).
		$$
		Therefore, for every $t\in\mathbb{R}$ and for almost every $x\in\Omega$ we always have
		$$
		\frac{\partial\phi_{\ast}(x,t)}{\partial x_{i}}\leq c_{0}\Big[\phi_{\ast}(x,t)+(\phi_{\ast}(x,t))^{1+\epsilon}\Big].
		$$
		\item In the proof of Theorem \ref{thm1} we apply Lemma \ref{lem.impo3} for $g(x,t)=((\phi_{min})_{\ast}(x,t))^{\alpha}$ with $\alpha\in(0,1)$ where the boundedness of $\Omega$ was needed to guarantee that
		$\max_{x\in\overline{\Omega}} g(x,t)<\infty$ for some $t$, which holds automatically in the case where for all $i=1,\cdots,N$, $\phi_{i}(x,t)=\phi(x,t)=|t|^{p(x)}$, where $p:\overline{\Omega}\to\mathbb{R}^+$ is a measurable function such that
		$1\leq p(\cdot)<N$, since
		$\phi_{\ast}(x,t)\leq\min\{1,t^{\frac{N^{2}}{N-p_{+}}}\}$. So in this case we don't need to assume that $\Omega$ is a bounded set and
		the space $W^{1}L_{\vec{\phi}}(\Omega)$ is nothing but the variable exponent Sobolev space $W^{1,p(\cdot)}(\Omega)$. Therefore, the embedding in Theorem \ref{thm1} is an extension to Musielak-Orlicz framework of the one already proved by Fan \cite[Theorem 1.1]{fan}.
	\end{enumerate}
\end{remark}
\begin{proof} \textbf{of Theorem \ref{thm1}.}
	Let $u\in W^{1}L_{\vec{\phi}}(\Omega)$. Assume first that the function $u$ is bounded and $u\neq0$.
	Defining $f(s)=\displaystyle\int_{\Omega}(\phi_{min}^{\ast\ast})_{\ast}\Big(x,\frac{|u(x)|}{s}\Big)dx$, for $s>0$, one has
	$\displaystyle\lim_{s\rightarrow0^{+}}f(s)=+\infty$ and $\displaystyle\lim_{s\rightarrow\infty}f(s)=0$. Since $f$ is continuous on
	$(0,+\infty)$, there exists $\lambda>0$ such that $f(\lambda)=1$. Then
	by the definition of the Luxemburg norm, we get
	\begin{equation}\label{imbd1}
	\|u\|_{(\phi_{min}^{\ast\ast})_{\ast}}\leq \lambda.
	\end{equation}
	On the other hand,
$$
f(\|u\|_{(\phi_{min}^{\ast\ast})_{\ast}})=
	\int_{\Omega}(\phi_{min}^{\ast\ast})_{\ast}\Big(x,\frac{u(x)}
	{\|u\|_{(\phi_{min}^{\ast\ast})_{\ast}}}\Big)dx\leq1=f(\lambda)
$$
and since $f$ is decreasing we get
	\begin{equation}\label{imbd2}
	\lambda\leq\|u\|_{(\phi_{min}^{\ast\ast})_{\ast}}.
	\end{equation}
	So that by \eqref{imbd1} and \eqref{imbd2}, we get $\lambda=\|u\|_{(\phi_{min}^{\ast\ast})_{\ast}}$ and
	\begin{equation}\label{imbd3}
	\int_{\Omega}(\phi_{min}^{\ast\ast})_{\ast}\Big(x,\frac{u(x)}{\lambda}\Big)dx=1.
	\end{equation}
	From \eqref{phi.min3.1} we can easily check that $(\phi_{min}^{\ast\ast})_{\ast}$ satisfies the following differential equation
	$$
	(\phi_{min}^{\ast\ast})^{-1}(x,(\phi_{min}^{\ast\ast})_{\ast}(x,t))\frac{\partial(\phi_{min}^{\ast\ast})_{\ast}}{\partial t}(x,t)
	=((\phi_{min}^{\ast\ast})_{\ast}(x,t))^{\frac{N+1}{N}}.
	$$
	Hence, by \eqref{prop2} we obtain the following inequality
	\begin{equation}\label{imbd4}
	\frac{\partial(\phi_{min}^{\ast\ast})_{\ast}}{\partial t}(x,t)\leq ((\phi_{min}^{\ast\ast})_{\ast}(x,t))
	^{\frac{1}{N}}(\phi_{min}^{\ast\ast})^{\ast-1}(x,(\phi_{min}^{\ast\ast})_{\ast}(x,t)), \mbox{ for a.e. }x\in \Omega.
	\end{equation}
	Let $h$ be the function defined by
	\begin{equation}\label{h}
	h(x)=\Big[(\phi_{min}^{\ast\ast})_{\ast}\Big(x,\frac{u(x)}{\lambda}\Big)\Big]
	^{\frac{N-1}{N}}.
	\end{equation}
	Since $(\phi_{min}^{\ast\ast})_{\ast}(\cdot,t)$ is Lipschitz continuous on $\overline{\Omega}$ and $(\phi_{min}^{\ast\ast})_{\ast}(x,\cdot)$ is locally Lipschitz continuous on $\mathbb{R}$, the function $h$ is Lipschitz continuous on $\overline{\Omega}$. Hence, we can compute using Lemma \ref{lem.impo5} (given in Appendix) for $f=h$, obtaining for a.e. $x\in\Omega$,
	\begin{equation*}\label{imbd5}
	\frac{\partial h(x)}{\partial x_{i}}=\frac{N-1}{N}\Big((\phi_{min}^{\ast\ast})_{\ast}
	\Big(x,\frac{u(x)}{\lambda}\Big)\Big)^{-\frac{1}{N}}
	\Big[\frac{\partial(\phi_{min}^{\ast\ast})_{\ast}}{\partial t}\Big(x,\frac{u(x)}{\lambda}\Big)
	\frac{\partial_{x_{i}}u(x)}{\lambda}+\frac{\partial(\phi_{min}^{\ast\ast})_{\ast}
	}{\partial_{x_{i}}}\Big(x,\frac{u(x)}{\lambda}\Big)\Big].
	\end{equation*}
	Therefore,
	\begin{equation}\label{imbd6}
	\displaystyle\sum_{i=1}^{N}\Big|\frac{\partial h(x)}{\partial x_{i}}\Big|\leq I_{1}+I_{2},
	\mbox{ for a.e. }x\in \Omega,
	\end{equation}
	where we have set
	$$
	I_{1}=\frac{N-1}{N\lambda}\Big((\phi_{min}^{\ast\ast})_{\ast}\Big(x,\frac{u(x)}{\lambda}
	\Big)\Big)
	^{\frac{-1}{N}}
	\frac{\partial(\phi_{min}^{\ast\ast})_{\ast}}{\partial t}\Big(x,\frac{u(x)}{\lambda}\Big)\displaystyle\sum_{i=1}
	^{N}|\partial_{x_{i}}u(x)|
	$$
	and
	$$
	I_{2}=\frac{N-1}{N}\Big((\phi_{min}^{\ast\ast})_{\ast}\Big(x,\frac{u(x)}{\lambda}\Big)
	\Big)
	^{\frac{-1}{N}}\displaystyle\sum_{i=1}^{N}
	\Big|\frac{\partial(\phi_{min}^{\ast\ast})_{\ast}}{\partial x_{i}}\Big(x,\frac{u(x)}{\lambda}\Big)\Big|.
	$$
	We shall now estimate the two integrals $\displaystyle\int_{\Omega}I_{1}(x)dx$ and $\displaystyle\int_{\Omega}I_{2}(x)dx$.
By \eqref{imbd4}, we can write
	\begin{equation}\label{imbd7}
	I_{1}(x)\leq\frac{N-1}{N\lambda}(\phi_{min}^{\ast\ast})^{\ast-1}\Big(x,(\phi_{min}^{\ast\ast})_{\ast}
	\Big(x,\frac{u(x)}{\lambda}\Big)\Big)\displaystyle\sum_{i=1}^{N}|\partial_{x_{i}}u(x)|.
	\end{equation}
By \eqref{prop1}, we have
	\begin{equation*}
    \begin{array}{clc}\displaystyle
    \int_{\Omega}(\phi_{min}^{\ast\ast})^{\ast}\Big(x,(\phi_{min}^{\ast\ast})^{\ast-1}\Big(x,(\phi_{min}^{\ast\ast})_{\ast}
	\Big(x,\frac{u(x)}{\lambda}\Big)\Big)\Big)dx\leq
    \int_{\Omega}(\phi_{min}^{\ast\ast})_{\ast}\Big(x,\frac{u(x)}{\lambda}\Big)dx\leq1.
    \end{array}
	\end{equation*}
	Hence
	\begin{equation}\label{imbd8}
	\Big\|(\phi_{min}^{\ast\ast})^{\ast-1}\Big(x,(\phi_{min}^{\ast\ast})_{\ast}\Big(x,\frac{u(x)}
	{\lambda}\Big)\Big)\Big\|_{(\phi_{min}^{\ast\ast})^{\ast}}\leq1.
	\end{equation}
	It follows from \eqref{Holder}, \eqref{imbd7} and \eqref{imbd8} that
	\begin{equation}\label{imbd9}
	\begin{array}{clc}\displaystyle
	\int_{\Omega}I_{1}(x)dx &\leq \displaystyle\frac{2(N-1)}{N\lambda}
	\Big\|(\phi_{min}^{\ast\ast})^{\ast-1}\Big(x,(\phi_{min}^{\ast\ast})_{\ast}\Big(x,\frac{u(x)}
	{\lambda}\Big)\Big)\Big\|_{(\phi_{min}^{\ast\ast})^{\ast}}\displaystyle\sum_{i=1}^{N}\Big\|\partial_{x_{i}}u(x)\Big\|
	_{\phi_{min}^{\ast\ast}}\\
	&\leq\displaystyle\frac{2(N-1)}{N\lambda}\sum_{i=1}^{N}\Big\|\partial_{x_{i}}u(x)\Big\|_{\phi_{min}^{\ast\ast}}\\
	&\leq\displaystyle\frac{2}{\lambda}\sum_{i=1}^{N}\Big\|\partial_{x_{i}}u(x)\Big\|_{\phi_{min}^{\ast\ast}}.
	\end{array}
	\end{equation}
	Recall that by the definition of $\phi_{min}$ and \eqref{astast}, we get $\|\partial_{x_{i}}u(x)\|_{\phi_{min}^{\ast\ast}}\leq\|\partial_{x_{i}}u(x)\|_{\phi_{i}}$,
	so that \eqref{imbd9} implies
	\begin{equation}\label{imbd10}
	\int_{\Omega}I_{1}(x)dx\leq
	\frac{2}{\lambda}\displaystyle\sum_{i=1}^{N}\Big\|\partial_{x_{i}}u(x)\Big\|_{\phi_{i}}.
	\end{equation}
	By \eqref{phi.min4}, we can write
	$$
	I_{2}(x)\leq c_{1}\Big[\Big((\phi_{min}^{\ast\ast})_{\ast}\Big(x,\frac{u(x)}{\lambda}\Big)\Big)
^{1-\frac{1}{N}}+\Big((\phi_{min}^{\ast\ast})_{\ast}\Big(x,\frac{u(x)}{\lambda}\Big)
	\Big)^{1-\frac{1}{N}+\nu}\Big],
	$$
	with $c_{1}=c_{0}(N-1).$ Since $(\phi_{min}^{\ast\ast})_{\ast}(\cdot,t)$ is continuous on $\overline{\Omega}$ and $\nu<\frac{1}{N}$, we can apply Lemma \ref{lem.impo3} (given in Appendix) with the functions
	$g(x,t)=\frac{((\phi_{min}^{\ast\ast})_{\ast}(x,t))^{1-\frac{1}{N}+\nu}}{t}$ and $f(x,t)
	=\frac{(\phi_{min}^{\ast\ast})_{\ast}(x,t)}{t}$ with $\epsilon=\frac{1}{8c_{1}c_{\ast}}$, obtaining for $t=\frac{|u(x)|}{\lambda}$
	\begin{equation}\label{I.2.1}
	\Big[(\phi_{min}^{\ast\ast})_{\ast}\Big(x,\frac{u(x)}{\lambda}\Big)\Big]^{1-\frac{1}{N}+
		\nu}\leq\frac{1}{8c_{1}c_{\ast}}(\phi_{min}^{\ast\ast})_{\ast}\Big(x,\frac{u(x)}
	{\lambda}\Big)+K_{0}\frac{|u(x)|}{\lambda}.
	\end{equation}
	Using again Lemma \ref{lem.impo3} (given in Appendix) with the functions
	$g(x,t)=\frac{((\phi_{min}^{\ast\ast})_{\ast}(x,t))^{1-\frac{1}{N}}}{t}$ and $f(x,t)
	=\frac{(\phi_{min}^{\ast\ast})_{\ast}(x,t)}{t}$ with $\epsilon=\frac{1}{8c_{1}c_{\ast}}$, we get by substituting $t$ by $\frac{|u(x)|}{\lambda}$
	\begin{equation}\label{I.2.2}
	\Big[(\phi_{min}^{\ast\ast})_{\ast}\Big(x,\frac{u(x)}{\lambda}\Big)\Big]^{1-\frac{1}{N}}
	\leq\frac{1}{8c_{1}c_{\ast}}(\phi_{min}^{\ast\ast})_{\ast}\Big(x,\frac{u(x)}
	{\lambda}\Big)+K_{0}\frac{|u(x)|}{\lambda},
	\end{equation}
	where $c_{\ast}$ is the constant in the embedding
	$W^{1,1}(\Omega)\hookrightarrow L^{\frac{N}{N-1}}(\Omega)$, that is
	\begin{equation}\label{imbd11}
	\|w\|_{L^{\frac{N}{N-1}}(\Omega)}\leq c_{\ast}\|w\|_{W^{1.1}(\Omega)},\mbox{ for all }
	w\in W^{1,1}(\Omega).
	\end{equation}
	By \eqref{I.2.1} and \eqref{I.2.2}, we obtain
	\begin{equation}\label{imbd12}
	\int_{\Omega}I_{2}(x)dx\leq\frac{1}{4c_{\ast}}+\frac{2K_{0}c_{1}}{\lambda}
	\|u\|_{L^{1}(\Omega)}.
	\end{equation}
	Puting together \eqref{imbd10} and \eqref{imbd12} in \eqref{imbd6}, we obtain
	\begin{equation*}
	\begin{array}{clc}
	\displaystyle\sum_{i=1}^{N}\|\partial_{x_{i}}h\|_{L^{1}(\Omega)}&\leq\frac{1}{4c_{\ast}}+\frac{2}
	{\lambda}\displaystyle\sum_{i=1}^{N}\|\partial_{x_{i}}u(x)\|_{\phi_{i}}
	+\frac{2K_{0}c_{1}}{\lambda}\|u\|_{L^{1}(\Omega)}&\\
	&\leq\frac{1}{4c_{\ast}}+\frac{2}
	{\lambda}\displaystyle\sum_{i=1}^{N}\|\partial_{x_{i}}u(x)\|_{\phi_{i}}
	+\frac{2K_{0}c_{1}c_{2}}{\lambda}\|u\|_{\phi_{max}},&
	\end{array}%
	\end{equation*}
	where $c_{2}$ is the constant in the continuous embedding \eqref{imbd.l1}. Then it follows
	\begin{equation}\label{imbd13}
	\displaystyle\sum_{i=1}^{N}\|\partial_{x_{i}}h\|_{L^{1}(\Omega)}\leq\frac{1}{4c_{\ast}}+\frac{c_{3}}
	{\lambda}\|u\|_{W^{1}L_{\vec{\phi}}(\Omega)},
	\end{equation}
	with $c_{3}=\max\{2,2K_{0}c_{1}c_{2}\}$. Now, using again Lemma \ref{lem.impo3} (given in Appendix) for the functions $g(x,t)=\frac{\big[(\phi_{min}^{\ast\ast})_{\ast}(x,t)\big]^{1-\frac{1}{N}}}{t}$
	and $f(x,t)
	=\frac{(\phi_{min}^{\ast\ast})_{\ast}(x,t)}{t}$ with $\epsilon=\frac{1}{4c_{\ast}}$, we obtain for $t=\frac{|u(x)|}{\lambda}$
	$$
	h(x)\leq\frac{1}{4c_{\ast}}(\phi_{min}^{\ast\ast})_{\ast}\Big(x,\frac{u(x)}{\lambda}\Big)
	+K_{0}\frac{|u(x)|}{\lambda},
	$$
	From \eqref{imbd.l1}, we obtain
	\begin{equation}\label{imbd14}
	\|h\|_{L^{1}(\Omega)}\leq\frac{1}{4c_{\ast}}+\frac{K_{0}c_{2}}{\lambda}
	\|u\|_{L_{\phi_{max}}(\Omega)}.
	\end{equation}
	Thus, by \eqref{imbd13} and \eqref{imbd14} we get
	$$
	\|h\|_{W^{1,1}(\Omega)}\leq\frac{1}{2c_{\ast}}
	+\frac{c_{4}}{\lambda}\|u\|_{W^{1}L_{\vec{\phi}}(\Omega)},
	$$
	where $c_{4}=c_{3}+K_{0}c_{2}$, which shows that $h\in W^{1,1}(\Omega)$ and
	which together with \eqref{imbd11} yield
	$$
	\|h\|_{L^{\frac{N}{N-1}}(\Omega)}\leq\frac{1}{2}+
	\frac{c_{4}c_{\ast}}{\lambda}\|u\|_{W^{1}L_{\vec{\phi}}(\Omega)}.
	$$
	Having in mind \eqref{imbd3}, we get $\displaystyle\int_{\Omega}[h(x)]^{\frac{N}{N-1}}dx=\int_{\Omega}(\phi_{min}^{\ast\ast})_{\ast}
	\Big(x,\frac{u(x)}{\lambda}\Big)dx=1$. So that one has
	\begin{equation}\label{imbd15}
	\|u\|_{(\phi_{min}^{\ast\ast})_{\ast}}=\lambda\leq 2c_{4}c_{\ast}\|u\|_{W^{1}L_{\vec{\Phi}}(\Omega)}.
	\end{equation}
	We now extend the estimate \eqref{imbd15} to an arbitrary $u\in W^{1}L_{\vec{\phi}}(\Omega)$.
	Let $T_{n}$, $n>0$, be the truncation function at levels $\pm n$ defined on $\mathbb{R}$ by
	$T_{n}(s)=\min\{n,\max\{s,-n\}\}$. Since $\phi_{max}$ is locally integrable, by \cite[Lemma 8.34.]{Adams} one has $T_{n}(u)\in W^{1}L_{\vec{\phi}}(\Omega)$. So that in view of \eqref{imbd15}
	\begin{equation}\label{imbd16}
	\|T_{n}(u)\|_{(\phi_{min}^{\ast\ast})_{\ast}}\leq 2c_{4}c_{\ast}\|T_{n}(u)\|_{W^{1}L_{\vec{\phi}(\Omega)}}
	\leq 2c_{4}c_{\ast}\|u\|_{W^{1}L_{\vec{\phi}(\Omega)}}.
	\end{equation}
	Let $k_{n}=\|T_{n}(u)\|_{(\phi_{min}^{\ast\ast})_{\ast}}$. Thanks to \eqref{imbd16}, the sequence $\{k_{n}\}_{n=1}^{\infty}$
	is nondecreasing and converges. If we denote $k=\lim_{n\rightarrow\infty}k_{n}$, by Fatou's
	lemma we have
	$$
	\int_{\Omega}(\phi_{min}^{\ast\ast})_{\ast}\Big(x,\frac{|u(x)|}{k}\Big)dx\leq\lim\inf\int_{\Omega}
	(\phi_{min}^{\ast\ast})_{\ast}\Big(x,\frac{|T_{n}(u)|}{k_{n}}\Big)dx\leq1.
	$$
	This implies that $u\in L_{(\phi_{min}^{\ast\ast})_{\ast}}(\Omega)$ and
	$$
	\|u\|_{(\phi_{min}^{\ast\ast})_{\ast}}\leq k=\lim_{n\rightarrow\infty}\|T_{n}(u)\|_{(\phi_{min}^{\ast\ast})_{\ast}}
	\leq 2c_{4}c_{\ast}\|u\|_{W^{1}L_{\vec{\phi}(\Omega)}}.
	$$
	The theorem is then completely proved.
\end{proof}
\begin{corollary}\label{lem.imbd.comp}
	Let $\Omega$ be an open bounded subset of $\mathbb{R}^{N}$, $(N\geq2)$, with the cone property. Assume that \eqref{phi.min3}, \eqref{phi.min4} are fulfilled,
	$(\phi_{min}^{\ast\ast})_{\ast}(\cdot,t)$ is Lipschitz continuous on $\overline{\Omega}$
	and $\phi_{max}$ is locally integrable.
	Let $A$ be a Musielak-Orlicz function where the function $A(\cdot,t)$ is continuous  on $\overline{\Omega}$ and $A\ll(\phi_{min}^{\ast\ast})_{\ast}$.
	Then, the following embedding
	$$
	W^{1}L_{\vec{\phi}}(\Omega)\hookrightarrow L_{A}(\Omega).
	$$
	is compact.
\end{corollary}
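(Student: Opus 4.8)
The plan is to obtain the compactness by combining the continuous imbedding of Theorem~\ref{thm1} with the classical compact imbedding of $W^{1,1}(\Omega)$ into $L^{1}(\Omega)$ and a modular equi-integrability argument based on the growth relation $A\ll(\phi_{min}^{\ast\ast})_{\ast}$. Since an imbedding is compact exactly when every bounded sequence of the domain has a subsequence converging in the target, I would start from a sequence $(u_{n})_{n}$ with $\|u_{n}\|_{W^{1}L_{\vec{\phi}}(\Omega)}\leq M$. By the continuous imbedding $W^{1}L_{\vec{\phi}}(\Omega)\hookrightarrow W^{1,1}(\Omega)$ recalled in Section~\ref{2}, $(u_{n})_{n}$ is bounded in $W^{1,1}(\Omega)$, and since $\Omega$ is bounded with the cone property, $W^{1,1}(\Omega)$ is compactly imbedded in $L^{1}(\Omega)$ (see \cite{Adams}); hence, after passing to a subsequence still denoted $(u_{n})_{n}$, there is $u$ with $u_{n}\to u$ in $L^{1}(\Omega)$ and, along a further subsequence, $u_{n}\to u$ a.e.\ in $\Omega$. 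On the other hand Theorem~\ref{thm1} (whose hypotheses are exactly the ones assumed here) gives $\|u_{n}\|_{(\phi_{min}^{\ast\ast})_{\ast}}\leq CM$ for every $n$, where $C$ is the imbedding constant; so, fixing any $\mu>CM$, we have $\int_{\Omega}(\phi_{min}^{\ast\ast})_{\ast}\big(x,u_{n}(x)/\mu\big)\,dx\leq1$ for all $n$, and Fatou's lemma gives the same bound for $u$. In particular $u\in L_{(\phi_{min}^{\ast\ast})_{\ast}}(\Omega)\subseteq L_{A}(\Omega)$, the inclusion being a consequence of $A\ll(\phi_{min}^{\ast\ast})_{\ast}$ and the local integrability of $A$.

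The core step is to show that $\int_{\Omega}A\big(x,(u_{n}(x)-u(x))/\lambda\big)\,dx\to0$ for each fixed $\lambda>0$. Because $A(\cdot,t)$ is continuous on the compact set $\overline{\Omega}$, the function $A$ is locally integrable; combining this with $A\ll(\phi_{min}^{\ast\ast})_{\ast}$, the remark in Section~\ref{2} applied with the constant $\e c$, and the convexity bound $(\phi_{min}^{\ast\ast})_{\ast}(x,\e s)\leq\e\,(\phi_{min}^{\ast\ast})_{\ast}(x,s)$ valid for $0<\e\leq1$, I would derive that for every $\e\in(0,1]$ and every $c>0$ there is a nonnegative $g_{\e,c}\in L^{1}(\Omega)$ with $A(x,t)\leq\e\,(\phi_{min}^{\ast\ast})_{\ast}(x,ct)+g_{\e,c}(x)$ for all $t\in\R$ and a.e.\ $x\in\Omega$. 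Taking $c=\lambda/(2\mu)$ and $t=(u_{n}-u)/\lambda$, and using the convexity and evenness of $(\phi_{min}^{\ast\ast})_{\ast}$ to split $\frac{u_{n}-u}{2\mu}=\tfrac12\frac{u_{n}}{\mu}+\tfrac12\frac{-u}{\mu}$, this would give the a.e.\ bound $A\big(x,(u_{n}-u)/\lambda\big)\leq\tfrac{\e}{2}(\phi_{min}^{\ast\ast})_{\ast}(x,u_{n}/\mu)+\tfrac{\e}{2}(\phi_{min}^{\ast\ast})_{\ast}(x,u/\mu)+g_{\e,c}(x)$. Integrating over an arbitrary measurable set $E\subseteq\Omega$ and using the uniform modular bounds from the first paragraph yields $\int_{E}A\big(x,(u_{n}-u)/\lambda\big)\,dx\leq\e+\int_{E}g_{\e,c}$ for all $n$; choosing first $\e$ small and then $|E|$ small (possible since $g_{\e,c}\in L^{1}(\Omega)$) makes the right-hand side arbitrarily small, uniformly in $n$. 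Hence the family $\{A(\cdot,(u_{n}-u)/\lambda)\}_{n}$ is uniformly integrable on the finite-measure set $\Omega$; since it tends to $0$ a.e., Vitali's convergence theorem gives $\int_{\Omega}A\big(x,(u_{n}-u)/\lambda\big)\,dx\to0$.

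To conclude, fix $\lambda>0$: by the previous step $\int_{\Omega}A\big(x,(u_{n}-u)/\lambda\big)\,dx\leq1$ for $n$ large, hence $\|u_{n}-u\|_{A}\leq\lambda$ for $n$ large by the definition of the Luxemburg norm. As $\lambda>0$ is arbitrary, $u_{n}\to u$ in $L_{A}(\Omega)$, which proves the compactness of the imbedding. The step I expect to be the main obstacle is the uniform integrability estimate of the second paragraph: one must first upgrade $A\ll(\phi_{min}^{\ast\ast})_{\ast}$ to the $\e$-dependent domination inequality and then absorb the difference $u_{n}-u$ by convexity, so that the principal term carries the small factor $\e$ while the remainder $g_{\e,c}$ contributes little because it lies in $L^{1}(\Omega)$ and is integrated over a set of small measure; once this is settled, Theorem~\ref{thm1}, the compactness of $W^{1,1}(\Omega)\hookrightarrow L^{1}(\Omega)$, and Vitali's theorem combine in a routine way.
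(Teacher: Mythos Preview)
Your argument is correct and follows the same overall strategy as the paper: boundedness in $L_{(\phi_{min}^{\ast\ast})_{\ast}}(\Omega)$ via Theorem~\ref{thm1}, extraction of a subsequence converging in measure (resp.\ a.e.) via the compact imbedding $W^{1,1}(\Omega)\hookrightarrow L^{1}(\Omega)$, and then an upgrade to norm convergence in $L_{A}(\Omega)$ using $A\ll(\phi_{min}^{\ast\ast})_{\ast}$. The only difference lies in this last step: the paper packages it as a separate lemma (Lemma~\ref{lem.impo4}), which shows directly that any sequence bounded in $L_{(\phi_{min}^{\ast\ast})_{\ast}}(\Omega)$ and convergent in measure is norm-Cauchy in $L_{A}(\Omega)$, whereas you unfold this step inline as a uniform-integrability estimate on $A\big(\cdot,(u_{n}-u)/\lambda\big)$ followed by Vitali's theorem. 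Both executions rely on the same two ingredients (the growth relation $A\ll(\phi_{min}^{\ast\ast})_{\ast}$ and the continuity of $A(\cdot,t)$ on $\overline{\Omega}$), so the routes are essentially equivalent; the paper's version has the advantage of isolating a reusable statement, while yours is self-contained.
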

\begin{proof}
	Let $\{u_{n}\}$ is a bounded sequence in $W^{1}L_{\vec{\phi}}(\Omega)$.
	By Theorem \ref{thm1}, $\{u_{n}\}$ is bounded in $L_{(\phi_{min}^{\ast\ast})_{\ast}}(\Omega).$
	Since the embedding
	$W^{1}L_{\vec{\phi}}(\Omega)\hookrightarrow W^{1,1}(\Omega)$ is continuous and the imbedding
	$W^{1,1}(\Omega)\hookrightarrow L^{1}(\Omega)$ is compact, we deduce That there exists a subsequence of $\{u_{n}\}$ still
	denoted by $\{u_{n}\}$ which converges in measure in
	$\Omega$. Since $A\ll(\phi_{min}^{\ast\ast})_{\ast}$, by Lemma \ref{lem.impo4} (given in Appendix) the sequence $\{u_{n}\}$
	converges in norm in $L_{A}(\Omega)$.
\end{proof}
\subsection{A trace result}
We prove here a trace result which is a useful tool to prove the coercivity of some energy functionals. Recall that
$\psi_{min}(x,t)=[(\phi_{min}^{\ast\ast})_{\ast}(x,t)]^{\frac{N-1}{N}}$ is a Musielak-Orlicz function. Indeed, we have
$$
\frac{\partial}{\partial t}(\psi_{min})^{-1}(x,t)=\frac{\partial}{\partial t}(\phi_{min}^{\ast\ast})_{\ast}^{-1}\Big(x,t^{\frac{N}{N-1}}\Big).
$$
By \eqref{phi.min3.1}, we get
$$
\frac{\partial}{\partial t}(\psi_{min})^{-1}(x,t)=\frac{N}{N-1}t^{\frac{1}{N-1}}\frac{(\phi_{min}^{\ast\ast})^{-1}\Big(x,t^{\frac{N}{N-1}}\Big)}
{t^{\frac{N}{N-1}+\frac{1}{N-1}}}=\frac{N}{N-1}\frac{(\phi_{min}^{\ast\ast})^{-1}\Big(x,t^{\frac{N}{N-1}}\Big)}
{t^{\frac{N}{N-1}}}.
$$
Being the inverse of a Musielak-Orlicz function, it is clear that $(\phi_{min}^{\ast\ast})^{-1}$ satisfies
$$
\displaystyle\lim_{\tau\to0^{+}}\frac{(\phi_{min}^{\ast\ast})^{-1}(x,\tau)}{\tau}=+\infty
\mbox{ and }
\displaystyle\lim_{\tau\to+\infty}\frac{(\phi_{min}^{\ast\ast})^{-1}(x,\tau)}{\tau}=0.
$$
Moreover, $(\phi_{min}^{\ast\ast})^{-1}(x,\cdot)$ is concave so that if $0<\tau<\sigma$, then
$$
\frac{(\phi_{min}^{\ast\ast})^{-1}(x,\tau)}{(\phi_{min}^{\ast\ast})^{-1}(x,\sigma)}\geq\frac{\tau}{\sigma}.
$$
Hence, if $0<s_{1}<s_{2}$,
$$
\frac{\frac{\partial}{\partial t}(\psi_{min})^{-1}(x,s_{1})}
{\frac{\partial}{\partial t}(\psi_{min})^{-1}(x,s_{2})}
=\frac{(\phi_{min}^{\ast\ast})^{-1}\Big(x,s_{1}^{\frac{N}{N-1}}\Big)}{(\phi_{min}^{\ast\ast})^{-1}\Big(x,s_{2}^{\frac{N}{N-1}}\Big)}
\frac{s_{2}^{\frac{N}{N-1}}}{s_{1}^{\frac{N}{N-1}}}\geq\frac{s_{1}^{\frac{N}{N-1}}}{s_{2}^{\frac{N}{N-1}}}
\frac{s_{2}^{\frac{N}{N-1}}}{s_{1}^{\frac{N}{N-1}}}=1.
$$
It follows that $\frac{\partial}{\partial t}(\psi_{min})^{-1}(x,t)$ is positive and decreases monotonically from $+\infty$ to $0$ as $t$
increases from $0$ to $+\infty$ and thus $\psi_{min}$ is  a Musielak-Orlicz function.
\begin{theorem}\label{thm2}
	Let $\Omega$ be an open bounded subset of $\mathbb{R}^{N}$, $(N\geq2)$, with the cone property. Assume that \eqref{phi.min3}, \eqref{phi.min4} are fulfilled, $(\phi_{min}^{\ast\ast})_{\ast}(\cdot,t)$ is Lipschitz continuous on $\overline{\Omega}$ and $\phi_{max}$ is locally integrable. Let $\psi_{min}$ the Musielak-Orlicz function defined in \eqref{psi}. Then,  the following boundary trace embedding $
	W^{1}L_{\vec{\phi}}(\Omega)\hookrightarrow L_{\psi_{min}}(\partial\Omega)$
	is continuous.
\end{theorem}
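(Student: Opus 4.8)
The plan is to deduce the trace estimate from the interior embedding of Theorem~\ref{thm1} by composing $u$ with the same auxiliary function used there and then invoking the classical trace inequality
$$
\int_{\partial\Omega}|w|\,d\sigma\leq c_{5}\|w\|_{W^{1,1}(\Omega)},\qquad w\in W^{1,1}(\Omega),
$$
which holds under the standing regularity assumption on $\Omega$. First I would assume, exactly as in the proof of Theorem~\ref{thm1}, that $u$ is bounded and $u\neq0$, put $\lambda=\|u\|_{(\phi_{min}^{\ast\ast})_{\ast}}$, and recall the function $h(x)=[(\phi_{min}^{\ast\ast})_{\ast}(x,u(x)/\lambda)]^{\frac{N-1}{N}}$, which equals $\psi_{min}(x,|u(x)|/\lambda)$ and which, by the proof of Theorem~\ref{thm1}, belongs to $W^{1,1}(\Omega)$ and satisfies both $\|h\|_{W^{1,1}(\Omega)}\leq\frac{1}{2c_{\ast}}+\frac{c_{4}}{\lambda}\|u\|_{W^{1}L_{\vec{\phi}}(\Omega)}$ and $\lambda\leq 2c_{4}c_{\ast}\|u\|_{W^{1}L_{\vec{\phi}}(\Omega)}$.

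Next I would apply the trace inequality to $w=h$. Since $h\geq0$ and $h$ is Lipschitz on $\overline{\Omega}$ (see the proof of Theorem~\ref{thm1}), its trace on $\partial\Omega$ is $\psi_{min}(\cdot,|u|/\lambda)$, so this gives
$$
\int_{\partial\Omega}\psi_{min}\Big(x,\frac{|u(x)|}{\lambda}\Big)d\sigma\leq c_{5}\Big(\frac{1}{2c_{\ast}}+\frac{c_{4}}{\lambda}\|u\|_{W^{1}L_{\vec{\phi}}(\Omega)}\Big).
$$
The step I expect to be the main obstacle is that $\psi_{min}$ is not homogeneous, so a modular bound at the scale $\lambda$ does not by itself yield a norm bound at the scale $\|u\|_{W^{1}L_{\vec{\phi}}(\Omega)}$. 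To overcome this I would use the convexity of $\psi_{min}(x,\cdot)$ together with $\psi_{min}(x,0)=0$: for every $\mu\geq\lambda$ one has $\psi_{min}(x,(\lambda/\mu)t)\leq(\lambda/\mu)\psi_{min}(x,t)$, hence
$$
\int_{\partial\Omega}\psi_{min}\Big(x,\frac{|u|}{\mu}\Big)d\sigma\leq\frac{\lambda}{\mu}\int_{\partial\Omega}\psi_{min}\Big(x,\frac{|u|}{\lambda}\Big)d\sigma\leq\frac{c_{5}}{\mu}\Big(\frac{\lambda}{2c_{\ast}}+c_{4}\|u\|_{W^{1}L_{\vec{\phi}}(\Omega)}\Big)\leq\frac{2c_{4}c_{5}}{\mu}\|u\|_{W^{1}L_{\vec{\phi}}(\Omega)},
$$
the last inequality using $\lambda\leq2c_{4}c_{\ast}\|u\|_{W^{1}L_{\vec{\phi}}(\Omega)}$. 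Taking $\mu=C\|u\|_{W^{1}L_{\vec{\phi}}(\Omega)}$ with $C=2c_{4}\max\{c_{\ast},c_{5}\}$ — which satisfies $\mu\geq\lambda$ — makes the right-hand side $\leq1$, and therefore $\|u\|_{L_{\psi_{min}}(\partial\Omega)}\leq\mu=C\|u\|_{W^{1}L_{\vec{\phi}}(\Omega)}$ for bounded $u$.

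Finally I would remove the boundedness hypothesis by the truncation argument that closes the proof of Theorem~\ref{thm1}: applying the bound above to $T_{n}(u)\in W^{1}L_{\vec{\phi}}(\Omega)$, the numbers $\kappa_{n}=\|T_{n}(u)\|_{L_{\psi_{min}}(\partial\Omega)}$ form a nondecreasing sequence bounded by $C\|u\|_{W^{1}L_{\vec{\phi}}(\Omega)}$; since $T_{n}(u)\to u$ in $W^{1,1}(\Omega)$ the traces converge in $L^{1}(\partial\Omega)$, hence a.e. on $\partial\Omega$ along a subsequence, and Fatou's lemma yields $\int_{\partial\Omega}\psi_{min}(x,|u|/s)\,d\sigma\leq1$ for every $s>\lim_{n}\kappa_{n}$, so $\|u\|_{L_{\psi_{min}}(\partial\Omega)}\leq C\|u\|_{W^{1}L_{\vec{\phi}}(\Omega)}$. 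Besides the non-homogeneity issue, the only other point needing care is that the trace of the composite function $h$ is given by the expected pointwise formula, which here is immediate since $h$ is continuous up to the boundary.
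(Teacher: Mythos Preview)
Your argument is correct and follows a genuinely different route from the paper's. The paper normalises by the \emph{boundary} Luxemburg norm $k=\|u\|_{L_{\psi_{min}}(\partial\Omega)}$, sets $l(x)=\psi_{min}(x,u(x)/k)$, and then has to split into two cases according to whether $k\geq\|u\|_{(\phi_{min}^{\ast\ast})_{\ast}}$ or not: in Case~1 it \emph{re-derives} the $W^{1,1}$ estimates of Theorem~\ref{thm1} with the trace constant in place of the Sobolev constant (so as to exploit $\int_{\partial\Omega}l=1$ exactly), and in Case~2 it simply quotes Theorem~\ref{thm1}. You instead keep the \emph{interior} scale $\lambda=\|u\|_{(\phi_{min}^{\ast\ast})_{\ast}}$ and reuse verbatim the bound $\|h\|_{W^{1,1}(\Omega)}\leq\frac{1}{2c_{\ast}}+\frac{c_{4}}{\lambda}\|u\|_{W^{1}L_{\vec{\phi}}(\Omega)}$ already obtained in the proof of Theorem~\ref{thm1}; the trace inequality then gives only a modular bound at the ``wrong'' scale, and you repair this with the elementary convexity inequality $\psi_{min}(x,(\lambda/\mu)t)\leq(\lambda/\mu)\psi_{min}(x,t)$. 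This removes the case distinction entirely and makes the theorem a direct corollary of the computations in Theorem~\ref{thm1}, at the modest cost of a slightly larger final constant; the paper's approach, by retuning the $\epsilon$'s in Lemma~\ref{lem.impo3} to the trace constant, may yield a marginally sharper constant but duplicates work. Your handling of the truncation step and of the identification of the trace of $h$ with $\psi_{min}(\cdot,|u|/\lambda)$ is in line with what the paper does implicitly.
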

\begin{proof}
	Let $u\in W^{1}L_{\vec{\phi}}(\Omega)$. As the embedding $W^{1}L_{\vec{\phi}}(\Omega)\hookrightarrow
	L_{(\phi_{min})_{\ast}}(\Omega)$ is continuous,
	the function $u$ belongs to $L_{(\phi_{min})_{\ast}}(\Omega)$ and then $u$ belongs to $L_{\psi_{min}}(\Omega)$. Clearly
	$W^{1}L_{\vec{\phi}}(\Omega)\hookrightarrow W^{1,1}(\Omega)$ and by the Gagliardo trace theorem (see \cite{gag}) we have the embedding
	$W^{1,1}(\Omega)\hookrightarrow L^{1}(\partial\Omega)$. Hence, we conclude that for all $u\in W^{1}L_{\vec{\phi}}(\Omega)$ there holds $u|_{\partial\Omega}\in L^{1}(\partial\Omega)$. Therefore, for every $u\in W^{1}L_{\vec{\phi}}(\Omega)$ the trace $u|_{\partial\Omega}$ is well defined. Assume first that $u$ is bounded and $u\neq0$. Since
	$(\phi_{min}^{\ast\ast})_{\ast}(\cdot,t)$ is continuous on $\partial\Omega$, the function $u$ belongs to $L_{\psi_{min}}(\partial\Omega)$.
	Let
	$$
	k=\|u\|_{L_{\psi_{min}}(\partial\Omega)}=\inf\Big\{\lambda>0;\;
	\int_{\partial\Omega}\psi_{min}\Big(x,\frac{u(x)}{\lambda}\Big)dx\leq1\Big\}.
	$$
	We have to distinguish the two cases : $k\geq\|u\|_{L_{(\phi_{min}^{\ast\ast})_{\ast}}(\Omega)}$ and $k<\|u\|_{L_{(\phi_{min}^{\ast\ast})_{\ast}}(\Omega)}$.
	Suppose first that \\
	\underline{Case 1} : Assume that
	\begin{equation}\label{trace1}
	k\geq\|u\|_{L_{(\phi_{min}^{\ast\ast})_{\ast}}(\Omega)}.
	\end{equation}
	Going back to \eqref{h}, we can repeat exactly the same lines with $l(x)=\psi_{min}\Big(x,\frac{u(x)}{k}\Big)$ instead of
	the function $h$, obtaining
	\begin{equation}\label{trace2}
	\|l\|_{W^{1,1}(\Omega)}
	\leq \Big[\frac{1}{4c}+\frac{c_{3}}{k}\|u\|_{W^{1}L_{\vec{\phi}}(\Omega)}
	+\|l\|_{L^{1}(\Omega)}\Big],
	\end{equation}
	where $c$ is the constant in the imbedding $W^{1,1}(\Omega)\hookrightarrow L^{1}(\partial\Omega)$, that is
	\begin{equation}\label{trace3}
	\|w\|_{L^{1}(\partial\Omega)}\leq c\|w\|_{W^{1,1}(\Omega)},\mbox{ for all }w\in
	W^{1,1}(\Omega).
	\end{equation}
	Since $(\phi_{min}^{\ast\ast})_{\ast}(\cdot,t)$ is continuous on $\overline{\Omega}$, using Lemma \ref{lem.impo3} (given in Appendix)
with the functions
	$f(x,t)=\frac{(\phi_{min}^{\ast\ast})_{\ast}
		(x,t)}{t}$ and $g(x,t)=\frac{l(x)}{t}$ and $\epsilon=\frac{1}{4c}$, we obtain for $t=\frac{|u(x)|}{k}$
	\begin{equation}\label{trace4}
	l(x)\leq\frac{1}{4c}(\phi_{min}^{\ast\ast})_{\ast}\Big(x,\frac{u(x)}{k}\Big)+K_{0}\frac{|u(x)|}{k}.
	\end{equation}
	By \eqref{trace1}, we have
	$$
	\int_{\Omega}(\phi_{min}^{\ast\ast})_{\ast}\Big(x,\frac{u(x)}{k}\Big)dx
	\leq\int_{\Omega}(\phi_{min}^{\ast\ast})_{\ast}\Big(x,\frac{u(x)}{\|u\|_{(\phi_{min}^{\ast\ast})_{\ast}}}\Big)dx\leq1.
	$$
	Integrating \eqref{trace4} over $\Omega$, we obtain
	\begin{equation}\label{trace5}
	\|l\|_{L^{1}(\Omega)}\leq\frac{1}{4c}+\frac{K_{0}c_{2}}{k}\|u(x)\|_{L_{\phi_{max}}(\Omega)}
	\leq\frac{1}{4c}+\frac{K_{0}c_{2}}{k}\|u\|_{W^{1}L_{\vec{\phi}}(\Omega)},
	\end{equation}
	where $c_{2}$ is the constant of the imbedding \eqref{imbd.l1}. Thus, by virtue of \eqref{trace2} and \eqref{trace5} we get
	$$
	\|l\|_{W^{1,1}(\Omega)}\leq\frac{1}{2c}+\frac{C_{4}}{k}\|u\|_{W^{1}L_{\vec{\phi}}(\Omega)},
	$$
	where $C_{4}=c_{2}K_{0}+c_{3}$.
	This implies that $l\in W^{1,1}(\Omega)$ and by \eqref{trace3} we arrive at
	$$
	\|l\|_{L^{1}(\partial\Omega)}\leq \frac{1}{2}+\frac{cC_{4}}{k}\|u\|_{W^{1}L_{\vec{\phi}}(\Omega)}.
	$$
	As
	$\|l\|_{L^{1}(\partial\Omega)}=\displaystyle\int_{\partial\Omega}|l(x)|dx=
	\displaystyle\int_{\partial\Omega}\psi_{min}\Big(x,\frac{u(x)}{k}\Big)dx=1$, we get
	$$
	\|u\|_{L_{\psi_{min}}(\partial\Omega)}=k\leq 2cC_{4}\|u\|_{W^{1}L_{\vec{\phi}}(\Omega)}.
	$$
	\underline{Case 2} : Assume that
	$$
	k<\|u\|_{(\phi_{min}^{\ast\ast})_{\ast}}.
	$$
	By Theorem \ref{thm1}, there is a constant $c>0$ such that
	$$
	\|u\|_{L_{\psi_{min}}(\partial\Omega)}=k<\|u\|_{(\phi_{min}^{\ast\ast})_{\ast}}\leq
	c\|u\|_{W^{1}L_{\vec{\phi}}(\Omega)}.
	$$
	Finally, in both cases there exists a constant $c>0$ such that
	$$
	\|u\|_{L_{\psi_{min}}(\partial\Omega)}\leq c\|u\|_{W^{1}L_{\vec{\phi}}(\Omega)}.
	$$
	For an arbitrary $u\in W^{1}L_{\vec{\phi}}(\Omega)$, we proceed as in the proof of Theorem \ref{thm1} by
	truncating the function $u$.
\end{proof}
\section{Application to some anisotropic elliptic equations}\label{4}
In this section, we apply the above results to get the existence and the uniqueness results of the weak solution for the problem \eqref{p1}.
\subsection{Properties of the energy functional}
\begin{definition}
	Let $\Omega$ be an open bounded subset of $\mathbb{R}^{N}$, $(N\geq2)$.
	By a weak solution of problem \eqref{p1}, we mean a function $u\in W^{1}L_{\vec{\phi}}(\Omega)$ satisfying for all
    $v\in C^{\infty}(\overline{\Omega})$ the identity
	\begin{equation}\label{dfn}
	\int_{\Omega}\sum_{i=1}^{N}a_{i}(x,\partial_{x_{i}}u)\partial_{x_{i}}vdx
	+\int_{\Omega}b(x)\varphi_{max}(x,u)vdx
	-\int_{\Omega}f(x,u)vdx
	-\int_{\partial\Omega}g(x,u)v ds=0.
	\end{equation}
\end{definition}
We note that all the terms in \eqref{dfn} make sense.
Indeed, for the first term in the right hand side in \eqref{dfn},
we can write by using \eqref{prop3}
$$
\int_{\Omega}\phi^{\ast}_{i}(x,\phi^{\ast-1}_{i}(x,P_{i}(x,\partial_{x_{i}}u(x))))dx
\leq\int_{\Omega}P_{i}(x,\partial_{x_{i}}u(x))dx\leq\int_{\Omega}p_{i}(x,\partial_{x_{i}}u(x))\partial_{x_{i}}u(x)dx,
$$
where $P_{i}$ is the Musielak-Orlicz function given in \eqref{a1} and $p_{i}(x,s)=\frac{\partial P_{i}(x,s)}{\partial s}$. Since $P_{i}$ is locally integrable and $P_{i}\ll\phi_{i}$, we can use Lemma \ref{lem.impo6} (given in Appendix) obtaining $p_{i}(\cdot,\partial_{x_{i}}u(\cdot))\in L_{P_{i}^{\ast}}(\Omega)$. So that by H\"older's inequality \eqref{Holder}, we get
$$
\int_{\Omega}\phi_{i}^{\ast}(x,\phi^{\ast-1}_{i}(x,P_{i}(x,\partial_{x_{i}}u(x))))dx\leq2\|p_{i}(\cdot,\partial_{x_{i}}u(\cdot))
\|_{P_{i}^{\ast}}
\|\partial_{x_{i}}u\|_{P_{i}}<\infty.
$$
Thus, $\phi^{\ast-1}_{i}(\cdot,P_{i}(\cdot,\partial_{x_{i}}u(\cdot)))\in
L_{\phi_{i}^{\ast}}(\Omega)$. Since $v\in C^{\infty}(\overline{\Omega})$ and $\phi_{max}$ is locally integrable, then $v\in W^{1}L_{\vec{\phi}}(\Omega)$.
So we can use the growth condition \eqref{a1} and again the
H\"older inequality \eqref{Holder}, to write
\begin{equation}\label{dfn1}
\int_{\Omega}a_{i}(x,\partial_{x_{i}}u)\partial_{x_{i}}vdx\leq 2c_{i}\|d_{i}(\cdot)\|_{\phi_{i}^{\ast}}
\|\partial_{x_{i}}v\|_{\phi_{i}}+2c_{i}\|\phi^{\ast-1}_{i}(\cdot,P_{i}(\cdot,\partial_{x_{i}}u(\cdot)))\|
_{\phi_{i}^{\ast}}\|\partial_{x_{i}}v\|_{\phi_{i}}<\infty.
\end{equation}
For the second term, the inequality \eqref{prop3} enables us to write
$$
\int_{\Omega}\phi_{max}^{\ast}(x,\phi_{max}^{\ast-1}(x,R(x,u(x))))dx
\leq\int_{\Omega}R(x,u(x))dx\leq\int_{\Omega}r(x,u(x))u(x)dx,
$$
where $R$ is the Musielak-Orlicz function given in \eqref{phi.max1} and $r(x,s)=\frac{\partial R(x,s)}{\partial s}$.
Since $R$ is locally integrable and $R\ll\phi_{max}$, Lemma \ref{lem.impo6} (given in Appendix) gives
$$
\int_{\Omega}\phi_{max}^{\ast}(x,\phi_{max}^{\ast-1}(x,R(x,\partial_{i}u)))dx\leq2\|r(\cdot,u(\cdot))\|_{R^{\ast}}
\|u\|_{R}<\infty,
$$
which shows that $\varphi_{max}(\cdot,u(\cdot))\in L_{\phi_{max}^{\ast}}(\Omega)$. Thus,
\begin{equation}\label{dfn2}
\int_{\Omega}b(x)\varphi_{max}(x,u)vdx\leq2\|b\|_{\infty}\|\varphi_{max}(\cdot,u(\cdot))\|_{\phi_{max}^{\ast}}
\|v\|_{\phi_{max}}<\infty.
\end{equation}
We now turn to the third term in the right hand side in \eqref{dfn}. By using \eqref{F} and the H\"older inequality \eqref{Holder}, one has
\begin{equation}\label{dfn3}
\int_{\Omega}f(x,u)vdx\leq k_{1}\|m(\cdot,u(\cdot))\|_{L_{M^{\ast}}(\Omega)}
\|v\|_{L_{M}(\Omega)}.
\end{equation}
Since $M$ is locally integrable and $M\ll\phi_{min}^{\ast\ast}$, then $M\ll\phi_{max}$ and Lemma \ref{lem.impo6} (given in Appendix)
ensures that
$\int_{\Omega}f(x,u)vdx<\infty.$
For the last term in the right hand side in \eqref{dfn}, using \eqref{G} to have
$$
\int_{\partial\Omega}g(x,u)vds\leq k_{2}\int_{\partial\Omega}h(x,u)vds.
$$
Since the primitive $H$ of $h$ is a locally integrable function satisfies $H\ll\phi_{min}^{\ast\ast}$, thus we can
us a similar way as in Lemma \ref{lem.impo6} (given in Appendix) to get that $h(x,u)\in L_{H^{\ast}}(\partial\Omega)$ and since $\partial\Omega$
is a bounded set, then the imbedding \eqref{imbd.l1} gives that $h(x,u)\in L^{1}(\partial\Omega)$. On the other hand, since
$v\in C^{\infty}(\overline{\Omega})$, then $v\in L^{\infty}(\partial\Omega)$. Therefor,
$$
\int_{\partial\Omega}g(x,u)vds<\infty.
$$
Define the functional $I:W^{1}L_{\vec{\phi}}(\Omega)\to\mathbb{R}$ by
\begin{equation}\label{I}
I(u)=\int_{\Omega}\sum_{i=1}^{N}A_{i}(x,\partial_{i}u)dx+
\int_{\Omega}b(x)\phi_{max}(x,u)dx
-\int_{\Omega}F(x,u)dx-\int_{\partial\Omega}G(x,u)ds.
\end{equation}
\par Some basic properties of $I$ are established in the following lemma.
\begin{lemma}\label{lem.I1}
	Let $\Omega$ be an open bounded subset of $\mathbb{R}^{N}$, $(N\geq2)$.
	Then
	\par(i) The functional $I$ is well defined on $W^{1}L_{\vec{\phi}}(\Omega)$.
	\par(ii) The functional $I$ has a G\^ateaux derivative $I^{\prime}(u)$ for every $u\in W^{1}L_{\vec{\phi}}(\Omega)$. Moreover, for every $v\in W^{1}L_{\vec{\phi}}(\Omega)$
	\begin{equation*}
	\begin{array}{lll}\displaystyle
	\langle I^{\prime}(u),v\rangle&=\displaystyle\int_{\Omega}\sum_{i=1}^{N}a_{i}(x,\partial_{i}u)
	\partial_{i}vdx+\int_{\Omega}b(x)\varphi_{max}(x,u)vdx\\
	&\;\;\;-\displaystyle\int_{\Omega}f(x,u)vdx-\int_{\partial\Omega}g(x,u)vds.
	\end{array}
	\end{equation*}
	So that, the critical points of $I$ are weak solutions to problem \eqref{p1}.
\end{lemma}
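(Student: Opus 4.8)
The plan is to treat the two assertions separately, reusing the well-definedness estimates established just before the statement of the lemma. For (i), I would argue term by term that each of the four integrals in \eqref{I} is finite for $u\in W^1L_{\vec\phi}(\Omega)$. The first term is handled by \eqref{a2}, which gives $A_i(x,\partial_{x_i}u)\ge\phi_i(x,|\partial_{x_i}u|)\ge0$, together with the growth bound \eqref{a1}: integrating $a_i(x,t)$ from $0$ to $\partial_{x_i}u(x)$ and using the same H\"older/Lemma \ref{lem.impo6} argument that produced \eqref{dfn1} shows $\int_\Omega A_i(x,\partial_{x_i}u)\,dx<\infty$. For the second term, $\phi_{max}$ is locally integrable and by \eqref{prop3} one has $\phi_{max}(x,u)\le u\,\varphi_{max}(x,u)$, so finiteness follows from $\varphi_{max}(\cdot,u(\cdot))\in L_{\phi_{max}^\ast}(\Omega)$ (shown before the lemma) and H\"older \eqref{Holder}, using $b\in L^\infty(\Omega)$. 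For the third and fourth terms, $|F(x,u)|\le k_1 M(x,|u|)$ by \eqref{F} and $M\ll\phi_{min}^{\ast\ast}\ll\phi_{max}$ with $M$ locally integrable, so Theorem \ref{thm1} places $u$ in $L_{(\phi_{min}^{\ast\ast})_\ast}(\Omega)\hookrightarrow L_{\phi_{max}}(\Omega)$ and Lemma \ref{lem.impo6} (or the $\ll$ inequality) gives $\int_\Omega M(x,|u|)\,dx<\infty$; similarly $|G(x,u)|\le k_2 H(x,|u|)$ with $H\ll\psi_{min}$, and Theorem \ref{thm2} gives $u|_{\partial\Omega}\in L_{\psi_{min}}(\partial\Omega)$, whence $\int_{\partial\Omega}H(x,|u|)\,ds<\infty$ since $\partial\Omega$ has finite measure and $H\ll\psi_{min}$.

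For (ii), I would fix $u,v\in W^1L_{\vec\phi}(\Omega)$ and $t\in(0,1)$ and form the difference quotient $\frac{I(u+tv)-I(u)}{t}$, then pass to the limit $t\to 0^+$ under the integral sign. For the $A_i$ term, convexity of $s\mapsto A_i(x,s)$ (noted in the introduction) makes $t\mapsto \frac{A_i(x,\partial_{x_i}u+t\partial_{x_i}v)-A_i(x,\partial_{x_i}u)}{t}$ monotone, with pointwise limit $a_i(x,\partial_{x_i}u)\partial_{x_i}v$, and the dominating function is built from \eqref{a1} together with an integrable majorant at $t=1$ coming from the $A_i$ finiteness already proved; this justifies dominated (or monotone) convergence. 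The $\phi_{max}$ term is handled identically using convexity of $\phi_{max}(x,\cdot)$, with limit $b(x)\varphi_{max}(x,u)v$ and the majorant from \eqref{phi.max1}. For the $F$ and $G$ terms, the mean value theorem gives $\frac{F(x,u+tv)-F(x,u)}{t}=f(x,u+\theta tv)v$ for some $\theta\in(0,1)$, and \eqref{F} together with the $\Delta_2$-condition on $M$ bounds $|f(x,u+\theta tv)|\le k_1 m(x,|u|+|v|)\le k_1 C\,m(x,\cdot)$-type majorant which lies in $L_{M^\ast}$, so H\"older against $v\in L_M$ supplies an integrable dominating function; the boundary term is treated the same way using $\Delta_2$ for $H$, \eqref{G}, the trace embedding of Theorem \ref{thm2}, and finiteness of $|\partial\Omega|$. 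Collecting the four limits yields exactly the stated formula for $\langle I'(u),v\rangle$, and linearity and continuity in $v$ (via the same H\"older bounds, which are linear in the norms of $v$) identify it as the G\^ateaux derivative. The final sentence, that critical points of $I$ solve \eqref{p1} weakly, is then immediate: $\langle I'(u),v\rangle=0$ for all $v\in W^1L_{\vec\phi}(\Omega)$ in particular for all $v\in C^\infty(\overline\Omega)$, which is precisely \eqref{dfn}.

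The main obstacle I expect is producing, in each term, a single $t$-independent integrable dominating function valid for all small $t$ — this is exactly where the $\Delta_2$-conditions on $M$ and $H$ enter (they let one absorb the shift by $tv$, $|tv|\le|v|$, into a constant multiple), and where the growth conditions \eqref{a1}, \eqref{phi.max1} are indispensable because without $\Delta_2$ on the $\phi_i$ themselves one cannot dominate $a_i(x,\partial_{x_i}u+t\partial_{x_i}v)$ uniformly; the trick is that \eqref{a1} bounds $a_i$ by $(\phi_i^\ast)^{-1}(x,P_i(x,\cdot))$ with $P_i\ll\phi_i$, so the relevant conjugate-space norms stay finite. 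A secondary point is to be slightly careful that the difference quotients for the convex terms are monotone in $t$, so that monotone convergence applies and one does not even need a two-sided bound; combined with the one-sided majorant at $t=1$ this gives a clean justification.
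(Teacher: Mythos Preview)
Your plan is correct and matches the paper's approach closely: both argue term by term, reducing (i) to the estimates \eqref{dfn1}--\eqref{dfn3} already established and handling (ii) by passing to the limit in the difference quotient under the integral sign. The paper uses the mean value theorem for the $A_i$ term (writing the quotient as $a_i(x,\partial_i u+\nu r\,\partial_i v)\,\partial_i v$ and dominating via \eqref{a1}), whereas you use convexity to sandwich the quotient between its value at $t=1$ and the pointwise limit $a_i(x,\partial_i u)\,\partial_i v$; both routes give a valid integrable dominating pair, and your version is arguably cleaner since it avoids checking uniform integrability of $P_i(x,\partial_i u+\nu r\,\partial_i v)$ in $r$. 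Your explicit invocation of the $\Delta_2$-condition on $M$ and $H$ for the $F$ and $G$ terms is also appropriate; the paper simply writes ``by a similar calculus'' there.

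One small correction: in (i) you write that Theorem~\ref{thm1} places $u$ in $L_{(\phi_{min}^{\ast\ast})_\ast}(\Omega)\hookrightarrow L_{\phi_{max}}(\Omega)$, but this last embedding is neither asserted in the paper nor obviously true (the Sobolev conjugate $(\phi_{min}^{\ast\ast})_\ast$ need not dominate $\phi_{max}$). It is also unnecessary: $u\in L_{\phi_{max}}(\Omega)$ by the very definition of $W^1L_{\vec\phi}(\Omega)$, and $M\ll\phi_{\min}^{\ast\ast}\le\phi_{\max}$ already gives $\int_\Omega M(x,|u|)\,dx<\infty$ directly, as the paper does. Drop that detour and your argument goes through without change.
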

\begin{proof}
	(i) For almost every $x\in\Omega$ and for every $\zeta\in\mathbb{R}$, we can write
	$$
	A_{i}(x,\zeta)=\int_{0}^{1}\frac{d}{dt}A_{i}(x,t\zeta)dt=\int_{0}^{1}
	a_{i}(x,t\zeta)\zeta dt.
	$$
	Then, by \eqref{a1} we get
	$$
	A_{i}(x,\zeta)\leq c_{i}d_{i}(x)\zeta+\int_{0}^{1}\phi^{\ast-1}_{i}(x,P_{i}(x,t\zeta))\zeta dt
	\leq c_{i}d_{i}(x)\zeta+\phi^{\ast-1}_{i}(x,P_{i}(x,\zeta))\zeta.
	$$
	In a similar way as in \eqref{dfn1}, we arrive at
	$$
	\Big|\int_{\Omega}A_{i}(x,\partial_{i}u(x))dx\Big|<\infty.
	$$
	Hence, the first term in the right hand side in \eqref{I} is well defined. For the second term, using \eqref{prop3}, the
	H\"older inequality \eqref{Holder} and \eqref{dfn2}, we obtain
	$$\Big|\int_{\Omega}b(x)\phi_{max}(x,u(x))dx\Big|\leq2\|b\|_{\infty}\|\varphi_{max}(\cdot,u(\cdot))\|_{\phi_{max}^{\ast}}\|u\|_{\phi_{max}}
	<\infty,
	$$
	while for the third term, by \eqref{F} and \eqref{prop3} we can write
	$$
	\int_{\Omega}\big|F(x,u(x))\big|dx\leq k_{1}\int_{\Omega}m(x,u)udx.
	$$
	Then, using the H\"older inequality \eqref{Holder} together with \eqref{dfn3} we get
	$$
	\int_{\Omega}\big|F(x,u(x))\big|dx\leq 2k_{1}\|m(\cdot,u(\cdot))\|_{M^{\ast}}\|u\|_{M}<\infty.
	$$
	For the last term in the right hand side in \eqref{I}, we can use \eqref{G} and \eqref{prop3} to have
    $$
    \int_{\partial\Omega}G(x,u)ds\leq k_{2}\int_{\partial\Omega}H(x,u)ds\leq\int_{\partial\Omega}h(x,u)uds.
    $$
    Since $H$ is a locally integrable function satisfies $H\ll\phi_{min}^{\ast\ast}$, then $H\ll\phi_{max}$
    and we can us a similar method as
    in Lemma \ref{lem.impo6} (given in Appendix) to get that $h(x,u)\in L_{H^{\ast}}(\partial\Omega)$. Thus, the H\"older inequality \eqref{Holder} implies that
    $$
    \int_{\partial\Omega}G(x,u)ds<\infty.
    $$
\\
	(ii) For every $i=1,\cdots,N$ define the
	functional $\Lambda_{i}:W^{1}L_{\vec{\phi}}(\Omega)\to\mathbb{R}$ by
	$$
	\Lambda_{i}(u)=\int_{\Omega}A_{i}(x,\partial_{i}u(x))dx.
	$$
	Denote by $B$, $L_{1}$, $L_{2}:W^{1}L_{\vec{\phi}}(\Omega)\to\mathbb{R}$ the functionals $B(u)=\int_{\Omega}b(x)\phi_{max}(x,u(x))dx$
	$L_{1}(u)=\int_{\Omega}F(x,u(x))dx$  and $L_{2}(u)=\int_{\partial\Omega}G(x,u(x))ds$.
	We observe that for $u\in W^{1}L_{\vec{\phi}}(\Omega)$, $v\in C^{\infty}(\overline{\Omega})$ and $r>0$
	$$
	\frac{1}{r}[\Lambda_{i}(u+rv)-\Lambda_{i}(u)]
	=\int_{\Omega}
	\frac{1}{r}\Big[A_{i}\Big(x,\frac{\partial}{\partial x_{i}}u(x)+r
	\frac{\partial}{\partial x_{i}}v(x)\Big)-A_{i}\Big(x,\frac{\partial}{\partial x_{i}}u(x)
	\Big)\Big]dx
	$$
	and
	$$
	\frac{1}{r}\Big[A_{i}\Big(x,\frac{\partial}{\partial x_{i}}u(x)
	+r\frac{\partial}{\partial x_{i}}v(x)\Big)-A_{i}\Big(x,\frac{\partial}
	{\partial x_{i}}u(x)\Big)\Big]
	\longrightarrow a_{i}\Big(x,\frac{\partial}{\partial x_{i}}u(x)
	\Big)\frac{\partial}{\partial x_{i}}v(x),
	$$
	as $r\rightarrow0$ for almost every $x\in\Omega$. On the other hand, the mean value theorem, there exists  $\nu\in[0,1]$ such that
	\begin{equation*}
	\begin{array}{lll}\displaystyle
	\frac{1}{r}\Big|A_{i}\Big(x,\frac{\partial}{\partial x_{i}}u(x)+r
	\frac{\partial}{\partial x_{i}}v(x)\Big)-A_{i}\Big(x,\frac{\partial}{\partial x_{i}}u(x)
	\Big)\Big|\\
	=\displaystyle\Big|a_{i}\Big(x,\frac{\partial}{\partial x_{i}}u(x)+\nu r
	\frac{\partial}{\partial x_{i}}v(x)\Big)\Big|\Big|\frac{\partial}{\partial x_{i}}v(x)
	\Big|.
	\end{array}
	\end{equation*}
	Hence, by using the last equality and \eqref{a1} we get
	\begin{equation*}
	\begin{array}{lll}\displaystyle
	\frac{1}{r}\Big|A_{i}\Big(x,\frac{\partial}{\partial x_{i}}u(x)+r
	\frac{\partial}{\partial x_{i}}v(x)\Big)-A_{i}\Big(x,\frac{\partial}{\partial x_{i}}u(x)
	\Big)\Big|\leq \\\displaystyle
	c_{i}\Big[d_{i}(x)+\phi^{\ast-1}_{i}\Big(x,P_{i}\Big(x,
	\frac{\partial}{\partial x_{i}}u(x)+\nu r
	\frac{\partial}{\partial x_{i}}v(x)\Big)\Big)\Big]
	\Big|\frac{\partial}{\partial x_{i}}v(x)\Big|.
	\end{array}
	\end{equation*}
	Next, by the H\"older inequality \eqref{Holder} we get
	$$
	c_{i}\Big[d_{i}(x)+\phi^{\ast-1}_{i}\Big(x,P_{i}\Big(x,
	\frac{\partial}{\partial x_{i}}u(x)+\nu r
	\frac{\partial}{\partial x_{i}}v(x)\Big)\Big)\Big]\Big|\frac{\partial}{\partial x_{i}}v(x)\Big|\in L^{1}(\Omega).
	$$
	The dominated convergence can be applied to yield
	$$
	\lim_{r\rightarrow0}\frac{1}{r}[\Lambda_{i}(u+rv)-\Lambda_{i}(u)]
	=\int_{\Omega}a_{i}\Big(x,\frac{\partial}{\partial x_{i}}u(x)\Big)
	\frac{\partial}{\partial x_{i}}v(x)dx:=\langle\Lambda^{\prime}_{i}(u),v\rangle,
	$$
	for every $i=1,\cdots,N$. By a similar calculus as in above, we can show that
	$\langle B^{\prime}(u),v\rangle=\int_{\Omega}b(x)\varphi_{max}(x,u)vdx$,
	$\langle L_{1}^{\prime}(u),v\rangle=\int_{\Omega}f(x,u)vdx$  and
	$\langle L_{2}^{\prime}(u),v\rangle=\int_{\Omega}g(x,u)vdx$.
\end{proof}
\subsection{An existence result}
Our main existence result is the following.
\begin{theorem}\label{thm3}
	Let $\Omega$ be an open bounded subset of $\mathbb{R}^{N}$, $(N\geq2)$, with the cone property. Assume that \eqref{a1}, \eqref{a2}, \eqref{a3}, \eqref{F}, \eqref{G}, \eqref{b}, \eqref{phi.min3} and \eqref{phi.min4} are fulfilled and suppose that $\phi_{max}$ and $\phi_{min}^{\ast}$ are locally integrable and $(\phi_{min}^{\ast\ast})_{\ast}(\cdot,t)$ is Lipschitz continuous on $\overline{\Omega}$. Then, problem \eqref{p1} admits at least a weak solution in $W^{1}L_{\vec{\phi}}(\Omega)$.
\end{theorem}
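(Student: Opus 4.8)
The plan is to solve \eqref{p1} by the direct method of the calculus of variations applied to the energy functional $I$ of \eqref{I}, minimized over the closed convex cone $K=\{v\in W^{1}L_{\vec{\phi}}(\Omega):v\ge 0\text{ a.e. in }\Omega\}$. By Lemma \ref{lem.I1}, $I$ is well defined and G\^ateaux differentiable on $W^{1}L_{\vec{\phi}}(\Omega)$ with derivative given by the left-hand side of \eqref{dfn}, so it suffices to produce a minimizer of $I$ over $K$ and to check that it is a critical point of $I$, hence a weak solution of \eqref{p1}.

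\textbf{Coercivity (the main step).} This is the genuinely delicate point, precisely because $L_{\phi_{i}}(\Omega)$ is neither reflexive nor separable and the usual reflexive minimization scheme is unavailable. From \eqref{a2} one has $\sum_{i}A_{i}(x,\partial_{x_{i}}u)\ge\sum_{i}\phi_{i}(x,|\partial_{x_{i}}u|)$, from \eqref{b} $b(x)\phi_{max}(x,u)\ge b_{0}\phi_{max}(x,u)$, and since $u\ge 0$ on $K$, \eqref{F}--\eqref{G} give $F(x,u)\le k_{1}M(x,u)$ and $G(x,u)\le k_{2}H(x,u)$; hence
\begin{equation*}
I(u)\ge\sum_{i=1}^{N}\int_{\Omega}\phi_{i}(x,|\partial_{x_{i}}u|)\,dx+b_{0}\int_{\Omega}\phi_{max}(x,u)\,dx-k_{1}\int_{\Omega}M(x,u)\,dx-k_{2}\int_{\partial\Omega}H(x,u)\,ds.
\end{equation*}
The bulk lower-order term is absorbed for free: since $M\ll\phi_{min}^{\ast\ast}\le\phi_{max}$, the remark following \eqref{grows} gives, for every $\sigma\in(0,1]$, a nonnegative $h_{\sigma}\in L^{1}(\Omega)$ with $M(x,t)\le\phi_{max}(x,\sigma t)+h_{\sigma}(x)\le\sigma\,\phi_{max}(x,t)+h_{\sigma}(x)$; taking $\sigma=b_{0}/(4k_{1})$ yields $k_{1}\int_{\Omega}M(x,u)\,dx\le\tfrac14 b_{0}\int_{\Omega}\phi_{max}(x,u)\,dx+C_{1}$ with $C_{1}$ independent of $u$. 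The boundary term is the real obstacle: it cannot be dominated by the $W^{1}L_{\vec{\phi}}$-norm directly, and here the trace Theorem \ref{thm2} and the imbedding Theorem \ref{thm1} enter. Combining $H\ll\psi_{min}$ with the trace estimate of Theorem \ref{thm2}, $H\ll\phi_{min}^{\ast\ast}$ with the imbedding of Theorem \ref{thm1}, and bridging modulars and Luxemburg norms through \eqref{norm.modular}, one shows that for every $\eta>0$ there is $C_{\eta}$ with $k_{2}\int_{\partial\Omega}H(x,u)\,ds\le\eta\big(\sum_{i}\int_{\Omega}\phi_{i}(x,|\partial_{x_{i}}u|)\,dx+b_{0}\int_{\Omega}\phi_{max}(x,u)\,dx\big)+C_{\eta}$. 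Choosing $\eta=\tfrac14$ gives $I(u)\ge\tfrac12\big(\sum_{i}\int_{\Omega}\phi_{i}(x,|\partial_{x_{i}}u|)\,dx+b_{0}\int_{\Omega}\phi_{max}(x,u)\,dx\big)-C_{1}-C_{1/4}$, and since by \eqref{norm.modular} the right-hand side tends to $+\infty$ as $\|u\|_{W^{1}L_{\vec{\phi}}(\Omega)}\to+\infty$, the functional $I$ is coercive; in particular it is bounded below on $K$ and every minimizing sequence is bounded in $W^{1}L_{\vec{\phi}}(\Omega)$.

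\textbf{Compactness and passage to the limit.} Let $\{u_{n}\}\subset K$ with $I(u_{n})\to m_{0}:=\inf_{K}I$; by the previous step $\{u_{n}\}$ is bounded in $W^{1}L_{\vec{\phi}}(\Omega)$. Since each $L_{\phi_{i}}(\Omega)$ is the dual of the separable space $E_{\phi_{i}^{\ast}}(\Omega)$, along a subsequence $\partial_{x_{i}}u_{n}$ converges weakly-$\ast$ in $L_{\phi_{i}}(\Omega)$; combined with the continuous imbeddings $L_{\phi_{i}}(\Omega)\hookrightarrow L^{1}(\Omega)$, $W^{1}L_{\vec{\phi}}(\Omega)\hookrightarrow W^{1,1}(\Omega)$ and the compactness of $W^{1,1}(\Omega)\hookrightarrow L^{1}(\Omega)$, this gives $u_{n}\to u$ in $L^{1}(\Omega)$ and a.e., with $u\in W^{1}L_{\vec{\phi}}(\Omega)$, $\partial_{x_{i}}u_{n}\rightharpoonup\partial_{x_{i}}u$ weakly-$\ast$ in $L_{\phi_{i}}(\Omega)$, and $u\ge 0$. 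Moreover, by Corollary \ref{lem.imbd.comp} (valid since $M\ll\phi_{min}^{\ast\ast}\ll(\phi_{min}^{\ast\ast})_{\ast}$) $u_{n}\to u$ in $L_{M}(\Omega)$, and, using the equi-integrability in $L^{1}(\Omega)$ of $\{\partial_{x_{i}}u_{n}\}$ (boundedness in $L_{\phi_{i}}$ plus superlinearity of $\phi_{i}$), the trace Theorem \ref{thm2} and Lemma \ref{lem.impo4} (a compact trace imbedding, argued as in Corollary \ref{lem.imbd.comp}), $u_{n}\to u$ in $L_{H}(\partial\Omega)$. Now each $A_{i}(x,\cdot)$ is convex (by \eqref{a3} and $a_{i}=\nabla_{\zeta}A_{i}$), so $v\mapsto\int_{\Omega}A_{i}(x,\partial_{x_{i}}v)\,dx$ is convex and norm-continuous, hence weakly-$\ast$ lower semicontinuous; $v\mapsto\int_{\Omega}b(x)\phi_{max}(x,v)\,dx$ is lower semicontinuous along $\{u_{n}\}$ by Fatou and the a.e. convergence; and $\int_{\Omega}F(x,u_{n})\,dx\to\int_{\Omega}F(x,u)\,dx$, $\int_{\partial\Omega}G(x,u_{n})\,ds\to\int_{\partial\Omega}G(x,u)\,ds$ thanks to these strong convergences, the $\Delta_{2}$-conditions on $M,H$ and the growth bounds \eqref{F}--\eqref{G} (continuity of the Nemytskii operators). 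Hence $I(u)\le\liminf_{n}I(u_{n})=m_{0}$, so $u\in K$ is a minimizer of $I$ on $K$.

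\textbf{Conclusion.} Since $u$ minimizes the $C^{1}$ functional $I$ over the convex set $K$, $\langle I'(u),v-u\rangle\ge 0$ for all $v\in K$; testing with $v=(1\pm t)u\in K$ shows $\langle I'(u),u\rangle=0$, and then, using $u\ge 0$ and the sign condition $f\ge 0$, the variational inequality is upgraded to $\langle I'(u),w\rangle=0$ for every $w\in C^{\infty}(\overline{\Omega})$, i.e. \eqref{dfn} holds; by Lemma \ref{lem.I1}, $u$ is a weak solution of \eqref{p1}. I expect the only real difficulty to be the coercivity step — concretely, the absorption of $\int_{\partial\Omega}H(x,u)\,ds$, which is exactly where the continuous imbedding (Theorem \ref{thm1}) and the boundary trace imbedding (Theorem \ref{thm2}) are indispensable, together with the modular/norm comparison \eqref{norm.modular}; the compactness and lower-semicontinuity parts are then routine once the relevant strong convergences in $L_{M}(\Omega)$ and $L_{H}(\partial\Omega)$ are in hand.
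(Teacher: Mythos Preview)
Your overall strategy---direct method via coercivity, weak$^\ast$ compactness, and sequential weak$^\ast$ lower semicontinuity---is the same as the paper's. The two visible differences are: (i) you minimize over the cone $K=\{v\ge 0\}$, while the paper minimizes over all of $W^{1}L_{\vec{\phi}}(\Omega)$ and recovers $u\ge 0$ only \emph{a posteriori} by inserting $v=-u^{-}$ in \eqref{dfn}; (ii) in the coercivity step you try to absorb $\int_{\Omega}M(x,u)\,dx$ and $\int_{\partial\Omega}H(x,u)\,ds$ into the principal modular, whereas the paper uses the $\Delta_{2}$-condition on $M$ and $H$ together with the norm embeddings of Theorems~\ref{thm1}--\ref{thm2} to bound these modulars directly.

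The first difference creates a genuine gap. Minimizing $I$ over the convex cone $K$ yields only the variational inequality $\langle I'(u),v-u\rangle\ge 0$ for all $v\in K$, equivalently $\langle I'(u),\psi\rangle\ge 0$ for every $\psi\ge 0$ together with $\langle I'(u),u\rangle=0$. This is the Euler--Lagrange system of an \emph{obstacle problem}, and it does \emph{not} in general upgrade to $\langle I'(u),w\rangle=0$ for all $w\in C^{\infty}(\overline{\Omega})$: the residual $I'(u)$ may be a nontrivial nonnegative measure supported on the coincidence set $\{u=0\}$. Your appeal to ``$u\ge 0$ and $f\ge 0$'' does not exclude this---those facts only force the zero-order terms to vanish on $\{u=0\}$, not the principal part. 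The paper avoids the issue entirely by minimizing unconstrained: the minimizer is then automatically a critical point of $I$, hence satisfies \eqref{dfn}, and nonnegativity is obtained afterwards from the equation. A secondary looseness is your boundary absorption claim: the assertion that for every $\eta>0$ one has $k_{2}\int_{\partial\Omega}H(x,u)\,ds\le\eta\,(\text{principal modular})+C_{\eta}$ is stated but not proved, and it is not a formal consequence of Theorem~\ref{thm2}, which is a \emph{norm} inequality, not a modular one; this is precisely where the paper invokes the $\Delta_{2}$-condition on $H$ to pass from the controlled Luxemburg norm $\|u\|_{L_{H}(\partial\Omega)}\le C_{2}\|u\|_{W^{1}L_{\vec{\phi}}(\Omega)}$ back to a bound on the modular.
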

\begin{proof} We divide the proof into three steps.\\
	\textbf{Step 1 }: Weak$^\ast$ lower semicontinuity property of $I$. Define the functional
	$J:W^{1}L_{\vec{\phi}}(\Omega)\to\mathbb{R}$ by
	$$
	J(u)=\int_{\Omega}\sum_{i=1}^{N}A_{i}(x,\partial_{i}u)dx+\int_{\Omega}b(x)\phi_{max}
	(x,u)dx,
	$$
	so that
	$$
	I(u)=J(u)-L_{1}(u)-L_{2}(u).
	$$
	First, we claim that $J$ is sequentially weakly lower semicontinuous. Indeed, since $u\mapsto\phi_{max}(x,u)$ is
	continuous, it is enough to show that the functional
	$$
    u\mapsto K(u)=\displaystyle\int_{\Omega}\displaystyle\sum_{i=1}^{N}A_{i}(x,\partial_{i}u)dx,
    $$
	is sequentially weakly$^\ast$ lower semicontinuous.
	To do this, let $u_{n}\overset{\ast}\rightharpoonup u$ in $W^{1}L_{\vec{\phi}}(\Omega)$ in the sense
	\begin{equation}\label{weak.conv}
	\int_{\Omega}u_{n}\varphi dx\rightarrow\int_{\Omega}u\varphi dx\mbox{ for all }
	\varphi\in E_{\phi_{max}^{\ast}}\mbox{ and }\int_{\Omega}\partial_{i}u_{n}\varphi dx
	\rightarrow\int_{\Omega}\partial_{i}u\varphi dx,
	\end{equation}
	for all $\varphi\in E_{\phi_{i}^{\ast}}$. By the definition of $\phi_{min}$ and $\phi_{max}$, \eqref{weak.conv} holds true for every $\varphi\in E_{\phi_{min}^{\ast}}(\Omega)$.
	Being $\phi_{min}^{\ast}$ locally integrable, one has $L^{\infty}(\Omega)\subset
	E_{\phi_{min}^{\ast}}(\Omega)$. Therefore, for every $i\in\{1,\cdots,N\}$
	\begin{equation}\label{weak.conv1}
	\partial_{i}u_{n}\rightharpoonup \partial_{i}u\mbox{ and }u_{n}\rightharpoonup u
	\mbox{ in }L^{1}(\Omega)\mbox{ for } \sigma(L^{1},L^{\infty}).
	\end{equation}
	As the embedding $W^{1}L_{\vec{\phi}}(\Omega)\hookrightarrow W^{1,1}(\Omega)$ is continuous, the compact embedding
	$W^{1,1}(\Omega)\hookrightarrow L^{1}(\Omega)$ implies that the sequence $\{u_{n}\}$ is relatively compact in $L^{1}(\Omega)$. Therefore, there exist a subsequence still indexed by $n$ and a function $v\in L^{1}(\Omega)$, such that $u_{n}\rightarrow v$ strongly in $L^{1}(\Omega).$
	In view of \eqref{weak.conv1}, we have $v=u$ almost everywhere on $\Omega$ and $u_{n}\rightarrow u$ in $L^{1}(\Omega).$
	Passing once more to a subsequence, we can have $u_{n}\rightarrow u$ almost everywhere on
	$\Omega$. Since $\zeta\rightarrow A_{i}(x,\zeta)$ is convex, by \eqref{a2} we can use
	\cite[Theorem 2.1, Chapter 8]{Ekeland} obtaining
	$$
	K(u)=\int_{\Omega}\sum_{i=1}^{N}A_{i}(x,\partial_{i}u)dx
	\leq\lim\inf\int_{\Omega}\sum_{i=1}^{N}A_{i}(x,\partial_{i}u_{n})dx=\liminf K(u_{n}).
	$$
	We now prove that $L_{1}(u)=\displaystyle\int_{\Omega}F(x,u)dx$ and $L_{2}(u)=\displaystyle\int_{\partial\Omega}G(x,u)ds$ are continuous.
	Since $M\ll\phi_{min}^{\ast\ast}$, it follows that $M\ll(\phi_{min}^{\ast\ast})_{\ast}$, then by Corollary \ref{lem.imbd.comp}
	we get $u_{n}\rightarrow u$ in $L_{M}(\Omega)$.
	Thus, there exists $n_{0}$ such that for every $n\geq n_{0}$, $\|u_{n}-u\|_{M}<\frac{1}{2}$.
	By \eqref{F}, we get
	$$
	\int_{\Omega}|F(x,u_{n}(x))|dx\leq k_{1}\int_{\Omega}M(x,u_{n}(x))dx.
	$$
	Let $\theta_{n}=\|u_{n}-u\|_{M}$. By the convexity of $M$, we can write
	\begin{equation*}
	\begin{array}{clc}
	M(x,u_{n}(x))&=M\Big(x,\theta_{n}\Big(\frac{u_{n}(x)-u(x)}{\theta_{n}}\Big)+(1-\theta_{n})
	\frac{u(x)}{1-\theta_{n}}\Big)&\\
	&\leq\theta_{n}M\Big(x,\frac{u_{n}(x)-u(x)}{\theta_{n}}\Big)+(1-\theta_{n})
	M\Big(x,\frac{u(x)}{1-\theta_{n}}\Big).&
	\end{array}
	\end{equation*}
	Hence,
	\begin{equation}\label{sci1}
	\int_{\Omega}M(x,u_{n}(x))dx\leq\theta_{n}+(1-\theta_{n})
	\int_{\Omega}M\Big(x,\frac{u(x)}{1-\theta_{n}}\Big)dx.
	\end{equation}
	Moreover,
	$$
	M\Big(x,\frac{u(x)}{1-\theta_{n}}\Big)\leq M(x,2u(x)).
	$$
	Since $M$ is locally integrable and $M\ll\phi_{max}$, there exists a nonnegative function
	$h\in L^1(\Omega)$, such that
	$$
	\int_{\Omega}M(x,2|u(x)|)dx\leq\int_{\Omega}\phi_{max}\Big(x,\frac{|u(x)|}{\|u\|_{\phi_{max}}}\Big)dx+\int_{\Omega}h(x)dx
	<\infty.
	$$
	Thus, the Lebesgue dominated convergence theorem yields
	$$
	\lim_{n\rightarrow\infty}\int_{\Omega}M\Big(x,\frac{u(x)}{1-\theta_{n}}\Big)dx=\int_{\Omega}M(x,u(x))dx
	$$
	and therefore, by \eqref{sci1}, we have
	$$
	\limsup_{n\rightarrow\infty}\int_{\Omega}M(x,u_{n}(x))dx\leq\int_{\Omega}M(x,u(x))dx.
	$$
	In addition, by Fatou's Lemma we get
	$$
	\int_{\Omega}M(x,u(x))dx\leq\liminf_{n\rightarrow\infty}\int_{\Omega}M(x,u_{n}(x))dx.
	$$
	Therefore, we have proved that
	$$
	\lim_{n\to+\infty}\int_{\Omega}M(x,u_{n}(x))dx=\int_{\Omega}M(x,u(x))dx.
	$$
	Thus, by \cite[Theorem 13.47]{Hewitt}, we get that $M(x,u_{n}(x))\rightarrow M(x,u(x))$ strongly in $L^{1}(\Omega)$
	which implies that $M(x,u_{n}(x))$ is equi-integrable, then so is $F(x,u_{n}(x))$
	and since $F(x,u_{n})\rightarrow F(x,u)$ almost everywhere on $\Omega$, then by Vitali's theorem we get
	$L_{1}(u_{n})\rightarrow L_{1}(u)$. Similarly, we can show that
	$L_{2}(u_{n})\rightarrow L_{2}(u)$. Thus, $L_{1}$ and $L_{2}$ are continuous
	and since $J$ is weakly$^\ast$ lower semicontinuous, we conclude that $I$ is weakly$^\ast$ lower semicontinuous.\\
	\textbf{Step 2 }: Coercivity of the functional $I$. By \eqref{a2}, \eqref{b} and \eqref{norm.modular}, we can write
	\begin{equation*}
	\begin{array}{lll}\displaystyle
	I(u)&\geq\displaystyle\int_{\Omega}\sum_{i=1}^{N}\phi_{i}(x,\partial_{i}u)dx+b_{0}\int_{\Omega}\phi_{max}(x,u)dx
	-\int_{\Omega}F(x,u)dx-\int_{\partial\Omega}G(x,u)ds&\\
	&\geq\displaystyle\sum_{i=1}^{N}\|\partial_{i}u\|_{\phi_{i}}+b_{0}\|u\|_{\phi_{max}}-N-b_{0}
	-\int_{\Omega}F(x,u)dx-\int_{\partial\Omega}G(x,u)ds&\\
	&\geq\displaystyle\min\{1,b_{0}\}\|u\|_{W^{1}L_{\vec{\phi}}(\Omega)}-\int_{\Omega}F(x,u)dx-\int_{\partial\Omega}G(x,u)ds-N-b_{0}.
	\end{array}
	\end{equation*}
	By \eqref{F} and \eqref{G}, we get
	$$
	I(u)\geq\min\{1,b_{0}\}\|u\|_{W^{1}L_{\vec{\phi}}(\Omega)}-k_{1}\int_{\Omega}M(x,u)dx
	-k_{2}\int_{\partial\Omega}H(x,u)ds-N-b_{0}.
	$$
	As $M\ll(\phi_{min}^{\ast\ast})_{\ast}$ and $H\ll\psi_{min}$, by Theorem \ref{thm1} and Theorem \ref{thm2} there exist two positive constant $C_{1}>0$ and $C_{2}>0$ such that $\|u\|_{L_{M}(\Omega)}\leq C_{1}
	\|u\|_{W^{1}L_{\vec{\phi}}(\Omega)}$ and $\|u\|_{L_{H}(\partial\Omega)}\leq C_{2}
	\|u\|_{W^{1}L_{\vec{\phi}}(\Omega)}$.
	Since $M$ and $H$ satisfy the $\Delta_{2}$-condition, there exist two positive constants $r_{1}>0$ and $r_{2}>0$ and
	two nonnegative functions $h_{1}\in L^1(\Omega)$ and $h_{2}\in L^1(\partial\Omega)$ such that
	\begin{equation*}
	\begin{array}{lll}\displaystyle
	I(u)&\geq\displaystyle\min\{1,b_{0}\}\|u\|_{W^{1}L_{\vec{\phi}}(\Omega)}-k_{1}r_{1}
	\int_{\Omega}M\Big(x,\frac{|u(x)|}{C_{1}\|u\|_{W^{1}L_{\vec{\phi}}(\Omega)}}\Big)dx&\\
	&-\displaystyle k_{2}r_{2}
	\int_{\partial\Omega}H\Big(x,\frac{|u(x)|}{C_{2}\|u\|_{W^{1}L_{\vec{\phi}}(\Omega)}}\Big)ds
	-\int_{\Omega}h_{1}(x)dx-\int_{\partial\Omega}h_{2}(x)ds
	-N-b_{0}&\\
	&\geq\displaystyle\min\{1,b_{0}\}\|u\|_{W^{1}L_{\vec{\phi}}(\Omega)}-k_{1}r_{1}
	\int_{\Omega}M\Big(x,\frac{|u(x)|}{\|u\|_{L_{M}(\Omega)}}\Big)dx&\\
	&\displaystyle-k_{2}r_{2}
	\int_{\partial\Omega}H\Big(x,\frac{|u(x)|}{\|u\|_{L_{H}(\partial\Omega)}}\Big)ds
	-\int_{\Omega}h_{1}(x)dx-\int_{\partial\Omega}h_{2}(x)ds-N-b_{0}.&\\
	&\geq\displaystyle\min\{1,b_{0}\}\|u\|_{W^{1}L_{\vec{\phi}}(\Omega)}
	-\int_{\Omega}h_{1}(x)dx-\int_{\partial\Omega}h_{2}(x)ds-k_{1}r_{1}-k_{2}r_{2}-N-b_{0},&
	\end{array}%
	\end{equation*}
	which implies
	\begin{equation}\label{coer}
	I(u)\rightarrow\infty\mbox{ as }\|u\|_{W^{1}L_{\vec{\phi}}(\Omega)}
	\rightarrow\infty.
	\end{equation}
	\textbf{Step 3 }: Existence of a weak solution. Since $I$ is coercive,
	for an arbitrary $\lambda>0$ there exists $R>0$ such that
	$$
	\|u\|_{W^{1}L_{\vec{\phi}}(\Omega)}>R\Rightarrow I(u)>\lambda.
	$$
	Let $E_{\lambda}=\{u\in W^{1}L_{\vec{\phi}}(\Omega) : I(u)\leq\lambda\}$ and denote by $B_{R}(0)$ the closed ball in $W^{1}L_{\vec{\phi}}(\Omega)$ of radius $R$ centered at origin.
	We claim that $\alpha=\inf_{v\in W^{1}L_{\vec{\phi}}(\Omega)}I(v)>-\infty$. If not, for all $n>0$ there is a sequence $u_n\in E_{\lambda}$ such that $I(u_n)<-n$. As $E_{\lambda}\subset B_{R}(0)$, by the Banach-Alaoglu-Bourbaki theorem there exists $u\in B_{R}(0)$ such that, passing to a subsequence if necessary, we can assume that
	$u_n\rightharpoonup u$ weak$^\ast$ in $W^{1}L_{\vec{\phi}}(\Omega)$. So that the weak$^\ast$ lower semicontinuity of $I$ implies $I(u)=-\infty$ which contradicts the fact that $I$ is well defined on $W^{1}L_{\vec{\phi}}(\Omega)$. Therefore, for every $n>0$ there exists a sequence $u_n\in E_{\lambda}$ such that $I(u_n)\leq \alpha + \frac{1}{n}$. Thus, there exists $u\in B_{R}(0)$ such that, for a subsequence still indexed by $n$, $u_n\rightharpoonup u$ weak$^\ast$ in $W^{1}L_{\vec{\phi}}(\Omega)$. Since $I$ is weakly$^\ast$ lower semicontinuous, we get
	$$
	I(u)=J(u)-L_{1}(u)-L_{2}(u)\leq\liminf_{n\rightarrow\infty} \Big(J(u_{n})-L_{1}(u_{n})-L_{2}(u_{n})\Big)=\liminf_{n\rightarrow\infty}I(u_n)\leq\alpha.
	$$
	Note that $u$ belongs also to $E_{\lambda}$, which yields $I(u)=\alpha\leq\lambda$. This shows that $I(u)=\min\{I(v):v\in W^{1}L_{\vec{\phi}}(\Omega)\}$. Moreover, inserting $v=-u^-$ as test function in \eqref{dfn} and then using \eqref{prop3}, we obtain $u\geq 0$.
	The theorem is completely proved.
\end{proof}
\subsection{Uniqueness result}
In order to prove the uniqueness of the weak solution we have found, we need to assume the following monotony assumptions
\begin{equation}\label{uneq1}
\big(f(x,s)-f(x,t)\big)\big(s-t\big)<0\mbox{ for a.e. }x\in\Omega\mbox{ and for all }s,t
\in\mathbb{R}\mbox{ with } s\neq t
\end{equation}
\begin{equation}\label{uneq2}
\big(g(x,s)-g(x,t)\big)\big(s-t\big)<0\mbox{ for a.e. }x\in\Omega\mbox{ and for all }s,t
\in\mathbb{R}\mbox{ with } s\neq t
\end{equation}
\begin{equation}\label{uneq3}
\big(\varphi_{max}(x,s)-\varphi_{max}(x,t)\big)
\big(s-t\big)>0\mbox{ for a.e. }x\in\Omega\mbox{ and for all }s,t\in\mathbb{R}\mbox{ with } s\neq t.
\end{equation}
\begin{theorem}\label{thm4}
	If in addition to the hypothesis \eqref{a3} the conditions \eqref{uneq1}, \eqref{uneq2} and \eqref{uneq3} are fulfilled, then the weak solution $u$ to problem \eqref{p1} is unique.
\end{theorem}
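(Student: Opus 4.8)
The plan is a by-now-classical monotonicity argument, the only subtlety being the admissible class of test functions. Let $u_{1},u_{2}\in W^{1}L_{\vec{\phi}}(\Omega)$ be two weak solutions of \eqref{p1}. First I would check that $v=u_{1}-u_{2}$ may be used in \eqref{dfn}. By Lemma \ref{lem.I1}(ii) the functional $I$ has a G\^ateaux derivative at every point of $W^{1}L_{\vec{\phi}}(\Omega)$, given there by an expression which is linear in $v$; since a weak solution is exactly a critical point of $I$, the identity \eqref{dfn} therefore extends from $v\in C^{\infty}(\overline{\Omega})$ to every $v\in W^{1}L_{\vec{\phi}}(\Omega)$. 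Alternatively, one first tests with the truncations $T_{n}(u_{1}-u_{2})\in W^{1}L_{\vec{\phi}}(\Omega)$ --- which lie in the space by \cite[Lemma 8.34]{Adams}, $\phi_{max}$ being locally integrable --- and lets $n\to\infty$, the passage being legitimate because of the domination estimates established right after \eqref{dfn}; note in particular that the boundary integrand is integrable since $u_{j}|_{\partial\Omega}\in L_{\psi_{min}}(\partial\Omega)$ by Theorem \ref{thm2}, $H\ll\psi_{min}$, $h(\cdot,u_{j})\in L_{H^{\ast}}(\partial\Omega)$ and H\"older's inequality \eqref{Holder} applies.

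Then I would write \eqref{dfn} for $u_{1}$ and for $u_{2}$ with the common test function $v=u_{1}-u_{2}$ and subtract, obtaining
\begin{equation*}
\begin{array}{l}
\displaystyle\int_{\Omega}\sum_{i=1}^{N}\big(a_{i}(x,\partial_{x_{i}}u_{1})-a_{i}(x,\partial_{x_{i}}u_{2})\big)\big(\partial_{x_{i}}u_{1}-\partial_{x_{i}}u_{2}\big)\,dx
+\int_{\Omega}b(x)\big(\varphi_{max}(x,u_{1})-\varphi_{max}(x,u_{2})\big)(u_{1}-u_{2})\,dx\\[2mm]
\displaystyle\qquad=\int_{\Omega}\big(f(x,u_{1})-f(x,u_{2})\big)(u_{1}-u_{2})\,dx+\int_{\partial\Omega}\big(g(x,u_{1})-g(x,u_{2})\big)(u_{1}-u_{2})\,ds.
\end{array}
\end{equation*}
By \eqref{a3} the first integral on the left is nonnegative, by \eqref{b} and \eqref{uneq3} so is the second, while by \eqref{uneq1} the first integral on the right is nonpositive and by \eqref{uneq2} so is the second. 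Hence all four integrals vanish.

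To conclude I would use the vanishing of $\int_{\Omega}b(x)\big(\varphi_{max}(x,u_{1})-\varphi_{max}(x,u_{2})\big)(u_{1}-u_{2})\,dx$: its integrand is pointwise nonnegative by \eqref{uneq3} and $b(x)\geq b_{0}>0$ by \eqref{b}, so $\big(\varphi_{max}(x,u_{1})-\varphi_{max}(x,u_{2})\big)(u_{1}(x)-u_{2}(x))=0$ for a.e. $x\in\Omega$, and then the strict monotonicity \eqref{uneq3} forces $u_{1}=u_{2}$ a.e. in $\Omega$. The same conclusion can equally be read off from the vanishing of $\int_{\Omega}\big(f(x,u_{1})-f(x,u_{2})\big)(u_{1}-u_{2})\,dx$ together with \eqref{uneq1}, which shows the monotonicity hypotheses are used somewhat redundantly here.

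I expect the one genuinely delicate point to be the very first: justifying that $u_{1}-u_{2}$, or its truncations, is an admissible substitute for a $C^{\infty}(\overline{\Omega})$ test function in \eqref{dfn}, since in the present generality ($\Delta_{2}$ assumed neither on the $\phi_{i}$ nor on their conjugates) smooth functions need not be norm-dense in $W^{1}L_{\vec{\phi}}(\Omega)$. This is to be handled exactly as the substitution $v=-u^{-}$ made at the end of the proof of Theorem \ref{thm3}, relying on Lemma \ref{lem.I1}(ii) and the domination estimates recorded just after \eqref{dfn}. Once this is in place, everything else is the purely algebraic sign bookkeeping above.
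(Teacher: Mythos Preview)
Your argument is essentially the paper's own proof: test the weak formulation for each solution with the difference $u_{1}-u_{2}$, subtract, and read off $u_{1}=u_{2}$ from the four strict monotonicity assumptions \eqref{a3}, \eqref{uneq1}, \eqref{uneq2}, \eqref{uneq3}. The only difference is that you explicitly flag and address the admissibility of $u_{1}-u_{2}$ as a test function (the paper simply inserts $v=u-w$ in \eqref{dfn} without comment), and you observe the redundancy among the monotonicity hypotheses; both are worthwhile additions but do not change the approach.
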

\begin{proof}
	Suppose that there exists another solution $w$.
	We choose $v=u-w$ as test function in \eqref{dfn} obtaining
	$$
	\begin{array}{lll}\displaystyle
	\int_{\Omega}\sum_{i=1}^{N}a_{i}(x,\partial_{x_{i}}u)\partial_{x_{i}}(u-w)
	dx + \int_{\Omega}b(x)
	\varphi_{max}(x,u)(u-w) dx\\
	\displaystyle-\int_{\Omega}f(x,u)(u-w) dx
	-\int_{\partial\Omega}g(x,u)(u-w) ds=0.
	\end{array}
	$$
	We replace  $u$ by $w$ in \eqref{dfn} and we take $v=w-u$. We obtain
	$$
	\begin{array}{lll}\displaystyle
	\int_{\Omega}\sum_{i=1}^{N}a_{i}(x,\partial_{x_{i}}w)\partial_{x_{i}}(w-u) dx
	+\int_{\Omega}b(x)
	\varphi_{max}(x,w)(w-u) dx\\
	\displaystyle-\int_{\Omega}f(x,w)(w-u) dx
	-\int_{\partial\Omega}g(x,w)(w-u) ds=0.
	\end{array}
	$$
	By combining the previous two equalities, we obtain \\
	\begin{equation*}
	\begin{array}{lll}\displaystyle
	\int_{\Omega}\sum_{i=1}^{N}\Big[a_{i}(x,\partial_{x_{i}}u)-a_{i}(x,\partial_{x_{i}}w)\Big](\partial_{x_{i}}u-\partial_{x_{i}}w)dx\\
	\displaystyle+\int_{\Omega}b(x)\Big[\varphi_{max}(x,u)
	-\varphi_{max}(x,w)\Big](u-w)
	dx\\
	\displaystyle-\int_{\Omega}\Big[f(x,u)-f(x,w)\Big](u-w)dx
	-\int_{\partial\Omega}\Big[g(x,u)-g(x,w)\Big](u-w)ds=0.
	\end{array}
	\end{equation*}
	In view of \eqref{a3}, \eqref{uneq1}, \eqref{uneq2} and \eqref{uneq3}, we obtain $u=w$ a.e. in $\Omega$.
\end{proof}
We recall here some important lemmas that are necessary for the accomplishment of the proofs of the above results.
\begin{lemma}\cite[Theorem 7.10.]{Mus}
Let $\Omega$ be an open bounded subset of $\mathbb{R}^{N}$, and let $\phi$ be a locally integrable Musielak-Orlicz function. Then $E_{\phi}$ is a separable space.
\end{lemma}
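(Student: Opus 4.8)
The plan is to exhibit an explicit countable dense subset of $E_{\phi}(\Omega)$ and to read off separability from it. A natural candidate is the family $\mathcal{S}$ of all finite linear combinations, with rational coefficients, of characteristic functions of dyadic cubes that meet $\Omega$; this family is visibly countable, so the whole task reduces to proving that $\mathcal{S}$ is dense in $E_{\phi}(\Omega)$.

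First I would record the elementary fact that drives everything: if $v\in L_{\phi}(\Omega)$ satisfies $\int_{\Omega}\phi\big(x,v(x)/\lambda\big)\,dx\le 1$, then $\|v\|_{\phi}\le\lambda$ by the very definition of the Luxemburg norm; hence, if a sequence $(v_n)$ has $\int_{\Omega}\phi\big(x,v_n(x)/\lambda\big)\,dx\to 0$ for each fixed $\lambda>0$, then $\|v_n\|_{\phi}\to 0$. With this in hand the approximation proceeds in three layers. Layer one: bounded functions are dense in $E_{\phi}(\Omega)$. Given $u\in E_{\phi}(\Omega)$ put $u_n=T_n(u)$; then $|u-u_n|\le|u|$, so $\phi\big(x,(u-u_n)/\lambda\big)\le\phi\big(x,u/\lambda\big)$, which is integrable because $u\in E_{\phi}(\Omega)$, and $u-u_n\to 0$ a.e., so dominated convergence gives $\int_{\Omega}\phi\big(x,(u-u_n)/\lambda\big)\,dx\to 0$ for each $\lambda>0$, whence $\|u-u_n\|_{\phi}\to 0$.

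Layer two: a bounded function $u$ with $\|u\|_{\infty}\le M$ is approximated in $L_{\phi}$ by simple functions. I would take simple $s_n$ with $\|u-s_n\|_{\infty}\to 0$ and $\|s_n\|_{\infty}\le M$ (partition the range of $u$), and use monotonicity of $\phi(x,\cdot)$ to bound $\int_{\Omega}\phi\big(x,(u-s_n)/\lambda\big)\,dx\le\int_{\Omega}\phi\big(x,\|u-s_n\|_{\infty}/\lambda\big)\,dx$; the integrand tends to $\phi(x,0)=0$ pointwise and is dominated by $\phi(x,M/\lambda)\in L^1(\Omega)$, so the integral tends to $0$ and $\|u-s_n\|_{\phi}\to 0$. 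Layer three: a simple function $\sum_{j=1}^{k}a_j\chi_{A_j}$ is approximated by an element of $\mathcal{S}$ — the coefficients $a_j$ may be replaced by nearby rationals harmlessly, and since $|\Omega|<\infty$ each $A_j$ can be approximated, up to a set of arbitrarily small Lebesgue measure, by a finite union $Q_j$ of dyadic cubes, after which $\int_{\Omega}\phi\big(x,a_j(\chi_{A_j}-\chi_{Q_j})/\lambda\big)\,dx=\int_{A_j\triangle Q_j}\phi\big(x,a_j/\lambda\big)\,dx$ is small by absolute continuity of the integral of the $L^1$ function $\phi(\cdot,a_j/\lambda)$. Concatenating the three layers shows $\mathcal{S}$ is dense, so $E_{\phi}(\Omega)$ is separable.

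The recurring, genuinely load-bearing step is the passage — carried out in each layer — from smallness of the modular (for every $\lambda>0$) to smallness of the Luxemburg norm, which in turn rests each time on producing a dominating function in $L^1(\Omega)$; this is exactly where the hypotheses enter, since local integrability of $\phi$ together with $|\Omega|<\infty$ is what guarantees $\phi(\cdot,c)\in L^1(\Omega)$ for every constant $c$, and the statement is false without an assumption of this kind. As the result is quoted as \cite[Theorem 7.10]{Mus}, this sketch suffices.
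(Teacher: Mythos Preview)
The paper does not give its own proof of this lemma; it is simply quoted from Musielak's monograph, so there is nothing to compare against. Your three-layer argument (truncation, then simple functions, then dyadic-rational simple functions) is the standard route to separability of $E_{\phi}$ and is correct.

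One minor technical point worth tightening: under the paper's Definition~\ref{dfn1.1}, ``locally integrable'' means $\int_K\phi(x,c)\,dx<\infty$ only for \emph{compact} $K\subset\Omega$, and since $\Omega$ is open this does not literally yield $\phi(\cdot,c)\in L^1(\Omega)$ from $|\Omega|<\infty$ alone. The paper itself uses the term in the stronger sense (see the proof of Lemma~\ref{lem.impo2}), so your reading is consistent with the authors'; but if you want to be airtight, in Layer~2 choose the simple approximants with $|s_n|\le|u|$ (split $u$ into positive and negative parts and use the standard monotone approximation) and dominate by $\phi(x,|u(x)|/\lambda)\in L^1(\Omega)$, which you already know is integrable because $u\in E_{\phi}(\Omega)$. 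This removes any dependence on $\phi(\cdot,c)\in L^1(\Omega)$ in that step.
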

\begin{lemma}\label{lem.impo1}
Let $\Omega$ be an open bounded subset of $\mathbb{R}^{N}$, and let $\phi$ be a locally integrable Musielak-Orlicz function. For every $\eta\in L_{\phi^{\ast}}(\Omega)$, the linear functional $F_{\eta}$ defined for every $\zeta\in E_{\phi}(\Omega)$ by
\begin{equation}\label{form.linear}
F_{\eta}(\zeta)=\int_{\Omega}\zeta(x)\eta(x)dx
\end{equation}
belongs to the dual space of $E_{\phi}(\Omega)$, denoted $E_{\phi}(\Omega)^{\ast}$, and its norm $\|F_{\eta}\|$ satisfies
	\begin{equation}\label{norm.form.linear}
	\|F_{\eta}\|\leq2\|\eta\|_{\phi^{\ast}},
	\end{equation}
	where $\|F_{\eta}\|=\sup\{|F_{\eta}(u)|, \|u\|_{L_M(\Omega)}\leq 1\}$.
\end{lemma}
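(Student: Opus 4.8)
\section*{Proof proposal for Lemma \ref{lem.impo1}}

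The plan is to obtain both assertions as immediate consequences of the H\"older inequality \eqref{Holder} recorded in Section \ref{2}, together with the inclusion $E_{\phi}(\Omega)\subseteq L_{\phi}(\Omega)$.

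First I would observe that $E_{\phi}(\Omega)\subseteq L_{\phi}(\Omega)$ straight from the definitions: if $\int_{\Omega}\phi(x,\zeta(x)/\lambda)\,dx<\infty$ for \emph{every} $\lambda>0$, then it is finite for \emph{some} $\lambda>0$. Consequently, for any $\zeta\in E_{\phi}(\Omega)$ and the given $\eta\in L_{\phi^{\ast}}(\Omega)$, inequality \eqref{Holder} applied to the pair $(\zeta,\eta)$ gives
$$
\int_{\Omega}|\zeta(x)\eta(x)|\,dx\leq 2\|\zeta\|_{\phi}\,\|\eta\|_{\phi^{\ast}}<\infty,
$$
so the integral defining $F_{\eta}(\zeta)$ in \eqref{form.linear} converges absolutely and $F_{\eta}$ is well defined on $E_{\phi}(\Omega)$; its linearity is then immediate from the linearity of the Lebesgue integral.

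Next I would read off continuity from the very same estimate: for every $\zeta\in E_{\phi}(\Omega)$,
$$
|F_{\eta}(\zeta)|\leq\int_{\Omega}|\zeta(x)\eta(x)|\,dx\leq 2\|\eta\|_{\phi^{\ast}}\,\|\zeta\|_{\phi},
$$
which shows that $F_{\eta}$ is a bounded linear functional, i.e. $F_{\eta}\in E_{\phi}(\Omega)^{\ast}$, and, upon taking the supremum over all $\zeta$ with $\|\zeta\|_{\phi}\leq 1$, yields the norm bound $\|F_{\eta}\|\leq 2\|\eta\|_{\phi^{\ast}}$, which is \eqref{norm.form.linear}.

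I do not expect any real obstacle: the statement is essentially a repackaging of \eqref{Holder}. The only point deserving a word of care is that the operator norm $\|F_{\eta}\|$ is taken with respect to the Luxemburg norm $\|\cdot\|_{\phi}$ that $E_{\phi}(\Omega)$ inherits as a subspace of $L_{\phi}(\Omega)$ — precisely the norm appearing in \eqref{Holder} — so no change of constant or renormalization is needed.
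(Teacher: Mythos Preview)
Your argument is correct: the result is an immediate consequence of the H\"older inequality \eqref{Holder} together with $E_{\phi}(\Omega)\subseteq L_{\phi}(\Omega)$, exactly as you write. The paper itself states Lemma~\ref{lem.impo1} without proof (it is used as an input in the proof of Lemma~\ref{lem.impo2}), so there is no alternative argument to compare against; your derivation is the standard one and matches what the authors evidently have in mind.
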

\begin{lemma}\label{lem.impo2}
	Let $\Omega$ be an open bounded subset of $\mathbb{R}^{N}$ and let $\phi$ be a locally integrable Musielak-Orlicz
	function. Then,the dual space of $E_{\phi}(\Omega)$ can be identified to the Musielak-Orlicz space $L_{\phi^{\ast}}(\Omega)$.
\end{lemma}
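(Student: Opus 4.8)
The plan is to identify $E_{\phi}(\Omega)^{\ast}$ with $L_{\phi^{\ast}}(\Omega)$ through the canonical map
$\Phi\colon L_{\phi^{\ast}}(\Omega)\to E_{\phi}(\Omega)^{\ast}$, $\Phi(\eta)=F_{\eta}$, with $F_{\eta}$ as in \eqref{form.linear}. That $\Phi$ is well defined, linear and bounded, with $\|F_{\eta}\|\leq 2\|\eta\|_{\phi^{\ast}}$, is exactly the content of Lemma \ref{lem.impo1}. Hence it remains to prove that $\Phi$ is injective, that it is surjective, and that there is a reverse bound $\|\eta\|_{\phi^{\ast}}\leq\|F_{\eta}\|$; the two norms are then equivalent and the identification is as Banach spaces.

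Two preliminary facts will be used repeatedly, both consequences of the local integrability of $\phi$ together with the boundedness of $\Omega$, exactly as in the argument giving $L^{\infty}(\Omega)\subset E_{\phi}(\Omega)$ used earlier: first, $\chi_{\Omega}\in E_{\phi}(\Omega)$, so $\phi(\cdot,c)\in L^{1}(\Omega)$ for every $c>0$, whence $\chi_{A}\in E_{\phi}(\Omega)$ for every measurable $A\subset\Omega$ and $\|\chi_{A_{n}}\|_{\phi}\to 0$ whenever $|A_{n}|\to 0$ (absolute continuity of the Luxemburg norm on $E_{\phi}$, via $\int_{A_{n}}\phi(x,1/\varepsilon)\,dx\to 0$); second, simple functions are dense in $E_{\phi}(\Omega)$. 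Injectivity is then immediate: if $F_{\eta}=0$, testing \eqref{form.linear} against $\zeta=(\operatorname{sgn}\eta)\chi_{A}$ gives $\int_{A}|\eta|\,dx=0$ for every measurable $A\subset\Omega$, so $\eta=0$ a.e.

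For surjectivity, fix $F\in E_{\phi}(\Omega)^{\ast}$ and set $\mu(A)=F(\chi_{A})$ for measurable $A\subset\Omega$. Since $|\mu(A)|\leq\|F\|\,\|\chi_{A}\|_{\phi}$ and $\|\chi_{A}\|_{\phi}\to 0$ as $|A|\to 0$, the set function $\mu$ is a bounded, countably additive signed measure absolutely continuous with respect to Lebesgue measure, so the Radon--Nikodym theorem produces $\eta\in L^{1}(\Omega)$ with $F(\chi_{A})=\int_{A}\eta\,dx$. By linearity $F(s)=\int_{\Omega}s\,\eta\,dx$ for every simple function $s$, and since simple functions are dense in $E_{\phi}(\Omega)$ and both $F$ and $\zeta\mapsto\int_{\Omega}\zeta\eta\,dx$ are continuous on $E_{\phi}(\Omega)$ (the latter because $\eta\in L^{1}(\Omega)$ and elements of $E_{\phi}(\Omega)$ have absolutely continuous norm), one gets $F(\zeta)=\int_{\Omega}\zeta\eta\,dx$ for all $\zeta\in E_{\phi}(\Omega)$.

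It remains to see that $\eta$ actually lies in $L_{\phi^{\ast}}(\Omega)$ and not merely in $L^{1}(\Omega)$. Here I would invoke the Amemiya (Orlicz) norm description
$$
\|\eta\|^{0}_{\phi^{\ast}}=\sup\Big\{\int_{\Omega}\zeta\eta\,dx:\ \zeta\in E_{\phi}(\Omega),\ \int_{\Omega}\phi(x,\zeta(x))\,dx\leq 1\Big\},
$$
together with its equivalence $\|\cdot\|_{\phi^{\ast}}\leq\|\cdot\|^{0}_{\phi^{\ast}}\leq 2\|\cdot\|_{\phi^{\ast}}$ with the Luxemburg norm. Using $F(\zeta)=\int_{\Omega}\zeta\eta\,dx$ and $|F(\zeta)|\leq\|F\|\,\|\zeta\|_{\phi}\leq\|F\|$ for admissible $\zeta$, together with a monotone convergence argument over truncations $\zeta\chi_{\{|\eta|\leq n\}}$, gives $\|\eta\|^{0}_{\phi^{\ast}}\leq\|F\|<\infty$, so $\eta\in L_{\phi^{\ast}}(\Omega)$, $F=\Phi(\eta)$, and the reverse norm bound holds. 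The main obstacle is the surjectivity step: one must justify the absolute continuity of $\mu$ and the density of simple functions in $E_{\phi}(\Omega)$ — both of which genuinely depend on the local integrability hypothesis — and then pass correctly from $F$ on simple functions to $F$ on all of $E_{\phi}(\Omega)$; the second delicate point is upgrading $\eta\in L^{1}(\Omega)$ to $\eta\in L_{\phi^{\ast}}(\Omega)$, where the Orlicz-norm duality formula carries the weight.
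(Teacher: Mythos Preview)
Your overall strategy coincides with the paper's: Radon--Nikodym applied to $\mu(A)=F(\chi_A)$, followed by density of simple functions in $E_\phi(\Omega)$ and an upgrade from $\eta\in L^1(\Omega)$ to $\eta\in L_{\phi^*}(\Omega)$. Your absolute-continuity argument ($\int_{A_n}\phi(x,1/\varepsilon)\,dx\to 0$) is more direct than the paper's, which goes through Luzin's theorem to produce an explicit bound $\|\chi_E\|_\phi\leq 1/\phi^{-1}(k,\tfrac{1}{2|E|})$; you also address injectivity and the reverse norm inequality explicitly, which the paper omits.

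There is, however, a circularity in your extension step. You assert that $\zeta\mapsto\int_\Omega \zeta\eta\,dx$ is continuous on $E_\phi(\Omega)$ because ``$\eta\in L^1(\Omega)$ and elements of $E_\phi(\Omega)$ have absolutely continuous norm''; this does not follow. Continuity of that linear form on $(E_\phi,\|\cdot\|_\phi)$ is \emph{equivalent} to $\eta\in L_{\phi^*}(\Omega)$, which is precisely what remains to be proved, so you cannot use it to pass from simple functions to all of $E_\phi$. Your later truncation $\zeta\chi_{\{|\eta|\leq n\}}$ does not repair this, since that function is again a general element of $E_\phi$ on which you do not yet know $F$ and $F_\eta$ agree. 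The fix is to reverse the order: first extend $F(\zeta)=\int\zeta\eta$ from simple to \emph{bounded} $\zeta\in E_\phi$ by dominated convergence (approximate $\zeta$ by simple $s_n$ with $|s_n|\leq\|\zeta\|_\infty$, so $|s_n\eta|\leq\|\zeta\|_\infty|\eta|\in L^1$); then take the supremum in your Orlicz-norm formula over bounded $\zeta$ with $\int_\Omega\phi(x,\zeta)\,dx\leq 1$, which by monotone convergence on the truncations $T_n(\zeta)$ suffices to give $\|\eta\|^0_{\phi^*}\leq\|F\|$. Only after $\eta\in L_{\phi^*}(\Omega)$ is established may you invoke Lemma~\ref{lem.impo1} and density to conclude $F=F_\eta$ on all of $E_\phi(\Omega)$. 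The paper handles this same passage by applying Fatou's lemma directly to $|\zeta_j\eta|\to|\zeta\eta|$ a.e.\ along a sequence of simple functions.
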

\begin{proof}
	According to Lemma \ref{lem.impo1} any element $\eta\in L_{\phi^{\ast}}(\Omega)$ defines a bounded linear functional
	$F_{\eta}$ on $L_{\phi}(\Omega)$ and also on $E_{\phi}(\Omega)$ which is given by \eqref{form.linear}. It remains to
	show that every bounded linear functional on $E_{\phi}(\Omega)$ is of the form $F_{\eta}$ for some $\eta\in L_{\phi^{\ast}}(\Omega)$.
	Given $F\in E_{\phi}(\Omega)^{\ast}$, we define the complex measure $\lambda$ by setting
	$$
	\lambda(E)=F(\chi_{E}),
	$$
	where $E$ is a measurable subset of $\Omega$ having finite measure and $\chi_{E}$ stands for the characteristic function of $E$. Due to the fact that $\phi$ is locally integrable, the measurable function
	$\phi\Big(\cdot,\phi^{-1}\Big(x_{0},\frac{1}{2|E|}\Big)\chi_{E}(\cdot)\Big)$ belongs to $L^{1}(\Omega)$ for any $x_{0}\in\Omega$. Hence, there is an
	$\epsilon>0$ such that for any measurable subset $\Omega^{\prime}$ of $\Omega$, one has
	$$
	|\Omega^{\prime}|<\epsilon\Rightarrow\int_{\Omega^{\prime}}\phi\Big(x,\phi^{-1}\Big(x_{0},\frac{1}{2|E|}\Big)\chi_{E}(x)\Big)
	dx\leq\frac{1}{2}.
	$$
	As $\phi(\cdot,s)$ is measurable on $E$, Luzin's theorem implies that for $\epsilon>0$ there exists a closed set
	$K_{\epsilon}\subset E$, with $|E\setminus K_{\epsilon}|<\epsilon$, such that the restriction of $\phi(\cdot,s)$ to
	$K_{\epsilon}$ is continuous. Let $k$ be the point where the maximum of $\phi(\cdot,s)$ is reached in the set $K_{\epsilon}$.
	$$
	\int_{E}\phi\Big(x,\phi^{-1}\Big(k,\frac{1}{2|E|}\Big)\Big)dx=
	\int_{K_{\epsilon}}\phi\Big(x,\phi^{-1}\Big(k,\frac{1}{2|E|}\Big)\Big)dx+
	\int_{E\setminus K_{\epsilon}}\phi\Big(x,\phi^{-1}\Big(k,\frac{1}{2|E|}\Big)\Big).
	$$
	For the first term in the right hand side of the equality, we can write
	$$
	\int_{K_{\epsilon}}\phi\Big(x,\phi^{-1}\Big(k,\frac{1}{2|E|}\Big)\Big)dx\leq
	\int_{E}\phi\Big(k,\phi^{-1}\Big(k,\frac{1}{2|E|}\Big)\Big)dx\leq\frac{1}{2}.
	$$
	Since $|E\setminus K_{\epsilon}|<\epsilon$, the second term can be estimated as
	$$
	\int_{E\setminus K_{\epsilon}}\phi\Big(x,\phi^{-1}\Big(k,\frac{1}{2|E|}\Big)\Big)
	\leq\frac{1}{2}.
	$$
	Thus, we get
	\begin{equation}\label{ineq1.dual}
	\int_{\Omega}\phi\Big(x,\phi^{-1}\Big(k,\frac{1}{2|E|}\Big)\chi_{E}(x)\Big)dx\leq1.
	\end{equation}
	Therefore, we
	$$
	|\lambda(E)|\leq\|F\|\|\chi_{E}\|_{\phi}\leq\frac{\|F\|}{\phi^{-1}\Big(k,\frac{1}{2|E|}\Big)}.
	$$
	As the right-hand side tends to zero when $|E|$ converges to zero, the measure $\lambda$ is absolutely continuous with respect to the Lebesgue measure and so by Radon-Nikodym's Theorem (see for instance \cite[Theorem 1.52]{Adams}), it can be expressed in the form
	$$
	\lambda(E)=\int_{E}\eta(x)dx,
	$$
	for some nonnegative function $\eta\in L^{1}(\Omega)$ unique up to sets of Lebesgue measure zero. Thus,
	$$
	F(\zeta)=\int_{\Omega}\zeta(x)\eta(x)dx
	$$
	holds for every measurable simple function $\zeta$. Note first that since $\Omega$ is bounded and $\phi$ is locally integrable, any measurable simple function lies in $E_{\phi}(\Omega)$ and the set of measurable simple functions is dense in $(E_{\phi}(\Omega),\|\cdot\|_{\phi})$.
	Indeed, for nonnegative $\zeta\in E_{\phi}(\Omega)$, there exists a sequence of increasing measurable simple functions $\zeta_{j}$  converging almost everywhere to $\zeta$ and $|\zeta_{j}(x)|\leq|\zeta(x)|$ on $\Omega$. By the theorem of dominated convergence one has $\zeta_{j}\to \zeta$ in $E_{\phi}(\Omega)$. For an arbitrary $\zeta\in E_{\phi}(\Omega)$, we obtain the same result splitting $\zeta$ into positive and negative parts.\\
	Let $\zeta\in E_{\phi}(\Omega)$ and let $\zeta_{j}$ be a sequence of measurable simple functions converging to $\zeta$ in $E_{\phi}(\Omega)$. By Fatou's Lemma and the inequality \eqref{norm.form.linear} we can write
	$$
	\begin{array}{lll}\displaystyle
	\big|\int_\Omega \zeta(x)\eta(x)dx\big|&\leq\displaystyle\liminf_{j\to+\infty}\int_\Omega |\zeta_{j}(x)\eta(x)|dx=\liminf_{j\to+\infty}F(|\zeta_j|\mbox{sgn}\eta)\\
	&\leq2\displaystyle\|\eta\|_{\phi^{\ast}}\liminf_{j\to+\infty}\|\zeta_j\|_{\phi}\leq
	2\|\eta\|_{\phi^{\ast}}\|\zeta\|_{\phi},
	\end{array}
	$$
	which implies that $\eta\in L_{\phi^{\ast}}(\Omega)$. Thus, the linear functional $F_{\eta}(\zeta)=\displaystyle\int_{\Omega}\zeta(x)\eta(x)dx$ and $F$ defined both on $E_{\phi}(\Omega)$ have the same values on the set of measurable simple functions, so they agree on $E_{\phi}(\Omega)$
	by a density argument.
\end{proof}
\begin{lemma}\label{lem.impo3}
	Let $\Omega$ be an open bounded subset of $\mathbb{R}^{N}$. Let $f,g:\Omega\times(0,+\infty)\to(0,+\infty)$ be continuous nondecreasing functions with respect to there second argument and $g(\cdot,t)$ is continuous on $\overline{\Omega}$ with
	$\displaystyle\lim_{t\rightarrow\infty}\frac{f(x,t)}{g(x,t)}=+\infty$,
	then for all $\epsilon>0$, there exists a positive constant $K_{0}$ such that
	$$
	g(x,t)\leq\epsilon f(x,t)+K_{0}, \mbox{ for all } t>0.
	$$
\end{lemma}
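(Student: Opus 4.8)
The plan is to fix $\epsilon>0$ and split the estimate at a threshold in the variable $t$: for $t$ large the hypothesis $f(x,t)/g(x,t)\to+\infty$ will force $g(x,t)\leq\epsilon f(x,t)$, while for $t$ small the monotonicity of $g$ in its second argument together with the compactness of $\overline{\Omega}$ will produce a constant bound $K_{0}$. More precisely, I would first prove the following uniform statement: there is $T>0$, depending on $\epsilon$ but not on $x$, such that $g(x,t)\leq\epsilon f(x,t)$ for every $x\in\overline{\Omega}$ and every $t\geq T$.

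The uniform threshold is the heart of the argument, and it is here that the boundedness of $\Omega$ enters. For each individual $x$ the divergence of the ratio yields some $T_{x}$ with $g(x,t)\leq\epsilon f(x,t)$ for $t\geq T_{x}$; one must upgrade the family $\{T_{x}\}$ to a single $T$. I would argue by contradiction: if no such $T$ existed, there would be sequences $x_{n}\in\overline{\Omega}$ and $t_{n}\to+\infty$ with $g(x_{n},t_{n})>\epsilon f(x_{n},t_{n})$; passing to a subsequence so that $x_{n}\to\bar{x}\in\overline{\Omega}$ (compactness), and using for each fixed $s>0$ and $n$ large enough the monotonicity $f(x_{n},s)\leq f(x_{n},t_{n})$, one gets $\epsilon f(x_{n},s)<g(x_{n},t_{n})$; letting $n\to+\infty$ and invoking the continuity of $f(\cdot,s)$ and $g(\cdot,s)$ on $\overline{\Omega}$ one controls $f(\bar{x},s)$ for every $s$ by the growth of $g(x_{n},t_{n})$, and combined with the positivity and $t$-monotonicity of $g(\bar{x},\cdot)$ this contradicts $\lim_{t\to+\infty}f(\bar{x},t)/g(\bar{x},t)=+\infty$. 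I expect this step --- turning the pointwise divergence of the ratio into a threshold uniform in $x$ --- to be the only real obstacle; it is exactly the place where the hypotheses ``$\Omega$ bounded'' and ``$g(\cdot,t)$ continuous on $\overline{\Omega}$'' are needed.

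Granting the uniform $T$, the rest is routine. For $0<t<T$, monotonicity of $g(x,\cdot)$ gives $g(x,t)\leq g(x,T)$, and since $g(\cdot,T)$ is continuous on the compact set $\overline{\Omega}$ the extreme value theorem lets me set $K_{0}:=\max_{x\in\overline{\Omega}}g(x,T)<+\infty$. Then for $t\geq T$ one has $g(x,t)\leq\epsilon f(x,t)\leq\epsilon f(x,t)+K_{0}$, while for $0<t<T$ one has $g(x,t)\leq K_{0}\leq\epsilon f(x,t)+K_{0}$ because $f>0$. Hence $g(x,t)\leq\epsilon f(x,t)+K_{0}$ for all $t>0$, which is the assertion of the lemma.
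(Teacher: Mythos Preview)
Your overall strategy --- split at a threshold $T$ in $t$, use $g(x,t)\leq\epsilon f(x,t)$ for $t\geq T$, and bound $g(x,t)\leq g(x,T)\leq\max_{\overline\Omega}g(\cdot,T)=:K_0$ for $t<T$ --- is exactly the paper's; its proof is four lines and simply asserts the existence of a single $t_0$ with $g(x,t)\leq\epsilon f(x,t)$ for all $t\geq t_0$, then takes $K_0=\max_{x\in\overline\Omega}\sup_{0<t<t_0}g(x,t)$.

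Where you go beyond the paper is in recognising that the \emph{uniformity in $x$} of the threshold is the real issue and in attempting a compactness/contradiction proof of it. That attempt, however, does not close. First, you invoke continuity of $f(\cdot,s)$ on $\overline\Omega$, which is not among the stated hypotheses (only $g(\cdot,t)$ is assumed continuous on $\overline\Omega$). Second, and more seriously, from $\epsilon f(x_n,s)<g(x_n,t_n)$ with $t_n\to\infty$ you cannot pass to a useful limit: $g(x_n,t_n)$ will typically diverge, so no finite inequality survives at the accumulation point $\bar x$, and no contradiction with the ratio hypothesis at $\bar x$ follows. In fact a uniform threshold need not exist under the hypotheses as written: on $\Omega=(0,1)$ take $g(x,t)=t$ and $f(x,t)=\max(t,\,xt^2)$; all the hypotheses hold, yet for $\epsilon=\tfrac12$ and $0<t\leq 1/x$ one has $g(x,t)-\tfrac12 f(x,t)=t/2$, which is unbounded over $(0,1)\times(0,\infty)$, so no finite $K_0$ works. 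Thus this step genuinely requires an extra assumption (for instance that $f(x,t)/g(x,t)\to+\infty$ uniformly in $x\in\overline\Omega$, which is what the applications in the paper in effect supply), and neither your compactness sketch nor the paper's bare assertion derives it from the stated hypotheses. Once a uniform $T$ is granted, the remainder of your argument is correct and identical to the paper's.
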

\begin{proof}
	Let $\epsilon>0$ be arbitrary. There exists $t_{0}>0$ such that $t\geq t_{0}$ implies $g(x,t)\leq\epsilon f(x,t)$. Then, for all $t\geq 0$,
	$$
	g(x,t)\leq\epsilon f(x,t)+K(x),
	$$
	where $K(x)=\sup_{t\in(0,t_{0})}g(x,t)$. Being $g(\cdot,t)$ continuous on $\overline{\Omega}$, one has $g(x,t)\leq\epsilon f(x,t)+K_{0}$ with
	$K_{0}=\max_{x\in\overline{\Omega}}K(x)$.
\end{proof}
\begin{lemma}\label{lem.impo4}
	Let $\Omega$ be an open bounded subset of $\mathbb{R}^{N}$.
	Let $A$, $B$ be two Musielak-Orlicz functions such that $B\ll A$, with $B(\cdot,t)$ is continuous on $\overline{\Omega}$. If a sequence
	$\{u_{n}\}$ is bounded in $L_{A}(\Omega)$ and converges in measure in $\Omega,$ then it converges in norm in $L_{B}(\Omega)$.
\end{lemma}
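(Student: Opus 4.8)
The plan is to reduce the statement to a modular estimate. Recall that for a Musielak--Orlicz function $B$ the following holds: if $\int_{\Omega}B(x,\lambda v_{n}(x))\,dx\to0$ for \emph{every} $\lambda>0$, then $\|v_{n}\|_{B}\to0$ --- indeed, for fixed $\lambda$ the modular is eventually $\leq1$, hence $\|\lambda v_{n}\|_{B}\leq1$, i.e.\ $\|v_{n}\|_{B}\leq1/\lambda$, and $\lambda>0$ is arbitrary. Let $u$ be the limit in measure of $\{u_{n}\}$ (well defined a.e.\ since $\Omega$ has finite measure) and set $v_{n}=u_{n}-u$. It then suffices to prove $\int_{\Omega}B(x,\lambda v_{n})\,dx\to0$ for each fixed $\lambda>0$.

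First I would check that $v_{n}$ makes sense in $L_{A}(\Omega)$ and that $\{v_{n}\}$ is bounded there. Writing $M=\sup_{n}\|u_{n}\|_{A}$ and passing to a subsequence along which $u_{n_{k}}\to u$ a.e., Fatou's lemma applied to $A\bigl(\cdot,u_{n_{k}}/(M+1)\bigr)$ --- together with the elementary inequality $\int_{\Omega}A(x,w)\,dx\leq\|w\|_{A}$, valid whenever $\|w\|_{A}\leq1$ (a consequence of the convexity of $A(x,\cdot)$ and of $\int_{\Omega}A(x,w/\|w\|_{A})\,dx\leq1$) --- gives $u\in L_{A}(\Omega)$ with $\|u\|_{A}\leq M+1$. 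Hence $M':=\sup_{n}\|v_{n}\|_{A}<\infty$, and $v_{n}\to0$ in measure.

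Now fix $\lambda,\varepsilon>0$. Since $B(\cdot,t)$ is continuous on the compact set $\overline{\Omega}$ it is bounded there, so $B$ is locally integrable and, as $|\Omega|<\infty$, $B(\cdot,t)\in L^{1}(\Omega)$ for every $t$. I would then fix three auxiliary quantities. First, $\delta\in(0,1]$ with $\int_{\Omega}B(x,\lambda\delta)\,dx<\varepsilon/3$; this is possible because $B(x,\lambda\delta)$ decreases to $0$ pointwise as $\delta\downarrow0$ while $B(\cdot,\lambda\delta)\leq B(\cdot,\lambda)\in L^{1}(\Omega)$, so dominated convergence applies. Second, using $B\ll A$ and the local integrability of $B$ (the remark following \eqref{grows}), a constant $c>0$ with $c\lambda M'\leq1$ and $c\lambda M'<\varepsilon/3$, together with a nonnegative $h\in L^{1}(\Omega)$ satisfying $B(x,t)\leq A(x,ct)+h(x)$ for all $t$ and a.e.\ $x$. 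Third, by absolute continuity of $\int h$, a number $\sigma>0$ such that $\int_{E}h\,dx<\varepsilon/3$ whenever $|E|<\sigma$. Splitting $\int_{\Omega}B(x,\lambda v_{n})\,dx$ over $\{|v_{n}|\leq\delta\}$ and $\{|v_{n}|>\delta\}$: on the former set $B(x,\lambda v_{n})\leq B(x,\lambda\delta)$, so that part is $<\varepsilon/3$ for all $n$; on the latter, $B(x,\lambda v_{n})\leq A(x,c\lambda v_{n})+h(x)$, where $\int_{\Omega}A(x,c\lambda v_{n})\,dx\leq\|c\lambda v_{n}\|_{A}\leq c\lambda M'<\varepsilon/3$ (again by the bound above, since $\|c\lambda v_{n}\|_{A}\leq1$), and $\int_{\{|v_{n}|>\delta\}}h\,dx<\varepsilon/3$ once $n$ is large enough that $|\{|v_{n}|>\delta\}|<\sigma$, which happens because $v_{n}\to0$ in measure. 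Adding up gives $\int_{\Omega}B(x,\lambda v_{n})\,dx<\varepsilon$ for large $n$, and since $\varepsilon>0$ and $\lambda>0$ were arbitrary, $\|u_{n}-u\|_{B}\to0$.

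The main obstacle is the piece over $\{|v_{n}|>\delta\}$: there $B(x,\lambda v_{n})$ has no integrable majorant uniform in $n$, and the estimate survives only because three things cooperate --- the slow growth $B\ll A$ lets one replace $B$ by $A$ plus an $L^{1}$ remainder, the uniform bound in $L_{A}$ lets one shrink the resulting $A$-modular by choosing the constant $c$ small, and the vanishing of the measure of $\{|v_{n}|>\delta\}$ kills the $L^{1}$ remainder; the choices of $\delta$, $c$, $h$ and $\sigma$ are engineered precisely to balance these against the harmless contribution of the small values of $v_{n}$. One should also not overlook the need to place $u$ in $L_{A}(\Omega)$ before forming $v_{n}$, which is the point of the preliminary Fatou step.
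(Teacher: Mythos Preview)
Your argument is correct, but it follows a different route from the paper's own proof, and it is worth recording the contrast. The paper never introduces the limit $u$ at all: it shows directly that $\{u_{n}\}$ is Cauchy in $L_{B}(\Omega)$. Given $\epsilon>0$ it sets $v_{j,k}=(u_{j}-u_{k})/\epsilon$, notes that $\|v_{j,k}\|_{A}\leq K$ for some $K$, then splits $\Omega$ into three pieces according to whether $|v_{j,k}|$ is below a fixed level $B^{-1}(x_{0},1/(2|\Omega|))$, between that level and a large threshold $t_{0}$, or above $t_{0}$. On the first set the continuity of $B(\cdot,t)$ on $\overline{\Omega}$ gives a pointwise bound; on the third, $B\ll A$ is used in the raw form $B(x,t)\leq\frac14 A(x,t/K)$ for $t\geq t_{0}$; and the middle set is controlled because its measure is small by convergence in measure. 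The output is $\int_{\Omega}B(x,|v_{j,k}|)\,dx\leq1$, hence $\|u_{j}-u_{k}\|_{B}\leq\epsilon$.

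Your version is in some respects cleaner: by first placing $u$ in $L_{A}(\Omega)$ via Fatou you work with $v_{n}=u_{n}-u$ instead of double indices; you use only a two-piece split $\{|v_{n}|\leq\delta\}\cup\{|v_{n}|>\delta\}$; and instead of the pointwise inequality $B(x,t)\leq\frac14 A(x,t/K)$ for large $t$ you invoke the integral form $B(x,t)\leq A(x,ct)+h(x)$ with $h\in L^{1}$ (the remark after \eqref{grows}), together with the scaling estimate $\int_{\Omega}A(\cdot,w)\leq\|w\|_{A}$ for $\|w\|_{A}\leq1$ to make the $A$-contribution small by choosing $c$ small. This last device is arguably more robust, since it sidesteps any issue of uniformity in $x$ of the threshold $t_{0}$. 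The price you pay is the preliminary Fatou step; the paper's Cauchy approach avoids that, at the cost of a slightly more elaborate three-set decomposition and an implicit identification of the $L_{B}$-limit with the measure limit at the end.
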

\begin{proof}
	Fix $\epsilon>0$. Defining $v_{j,k}(x)=\frac{u_{j}(x)-u_{k}(x)}{\epsilon}$, we shall show that $\{u_{j}\}$ is a Cauchy sequence in the Banach space $L_{B}(\Omega)$. Clearly $\{v_{j,k}\}$ is
	bounded in $L_{A}(\Omega)$, say $\|v_{j,k}\|_{A}\leq K$ for all $j$ and $k$. Since $B\ll A$ there exists a positive number $t_{0}$ such that for $t\geq t_{0}$ one has
	$$
	B(x,t)\leq \frac{1}{4}A\Big(x,\frac{t}{K}\Big).
	$$
	On the other hand, since $B(\cdot,t)$ is continuous on $\overline{\Omega}$. Let $x_{0}$ be the point where the maximum of
	$B(\cdot,t)$ is reached in $\overline{\Omega}$. Let $\delta=\frac{1}{4B(x_{0},t_{0})}$ and set
	$$
	\Omega_{j,k}=\Big\{x\in\Omega:|v_{j,k}|\geq B^{-1}\Big(x_{0},\frac{1}{2|\Omega|}\Big)\Big\}.
	$$
	Since $\{u_{j}\}$ converges in measure, there exists an integer $N_{0}$ such that $|\Omega_{j,k}|\leq\delta$ whenever
	$j$, $k\geq N_{0}$. Defining
	$$
	\Omega_{j,k}^{\prime}=\{x\in\Omega_{j,k}:|v_{j,k}|\geq t_{0}\}\mbox{ and }
	\Omega_{j,k}^{\prime\prime}=\Omega_{j,k}\setminus\Omega_{j,k}^{\prime},
	$$
	one has
	\begin{equation}\label{lem.impo4.1}
	\begin{array}{lll}\displaystyle
	 \int_{\Omega}B(x,|v_{j,k}(x)|)dx&=\displaystyle\int_{\Omega\setminus\Omega_{j,k}}B(x,|v_{j,k}(x)|)dx+\int_{\Omega_{j,k}^{\prime}}
B(x,|v_{j,k}(x)|)dx\\
	&\;\;\;+\displaystyle\int_{\Omega_{j,k}^{\prime\prime}}B(x,|v_{j,k}(x)|)dx.
	\end{array}
	\end{equation}
	For the first term in the right hand side of \eqref{lem.impo4.1}, we can write
	\begin{equation*}
	\begin{array}{clc}\displaystyle
	\int_{\Omega\setminus\Omega_{j,k}}B(x,|v_{j,k}(x)|)dx&
	\leq\displaystyle\int_{\Omega\setminus\Omega_{j,k}}
	B\Big(x,B^{-1}\Big(x_{0},\frac{1}{2|\Omega|}\Big)\Big)dx\\
	&\leq\displaystyle\int_{\Omega}B\Big(x_{0},B^{-1}\Big(x_{0},\frac{1}{2|\Omega|}\Big)\Big)dx\\
	&\leq\frac{1}{2}.
	\end{array}%
	\end{equation*}
	Since $B\ll A$, the second term in the right hand side of \eqref{lem.impo4.1} can be estimated as follows
	$$
	\int_{\Omega_{j,k}^{\prime}}B(x,|v_{j,k}(x)|)dx\leq\frac{1}{4}\int_{\Omega}A\Big(x,\frac{|v_{j,k}|}{K}\Big)dx\leq\frac{1}{4},
	$$
	while for the third term in the right hand side of \eqref{lem.impo4.1}, we get
	$$
	\int_{\Omega_{j,k}^{\prime\prime}}B(x,|v_{j,k}(x)|)dx\leq\int_{\Omega_{j,k}}B(x,t_{0})dx\leq\delta B(x_{0},t_{0})
	=\frac{1}{4}.
	$$
	Finally, puting all the above estimates in \eqref{lem.impo4.1}, we arrive at
	$$
	\int_{\Omega}B(x,|v_{j,k}(x)|)dx\leq1, \mbox{ for every }j, k\geq N_{0},
	$$
	which yields $\|u_{j}-u_{k}\|_{B}\leq\epsilon$. Thus, $\{u_{j}\}$ converges in the Banach space $L_{B}(\Omega)$.
\end{proof}
\begin{lemma}\label{lem.impo5}
	Let $u\in W_{loc}^{1.1}(\Omega)$ and let $F:\overline{\Omega}\times\mathbb{R}^{+}\to\mathbb{R}^{+}$ be a Lipschitz continuous function. If $f(x)=F(x,u(x))$ then $f\in W_{loc}^{1.1}(\Omega)$. Moreover, for every $j=1,\cdots,N$, the weak derivative $\partial_{x_{j}}f$ of $f$ is such that
	$$
	\partial_{x_{j}}f(x)=\frac{\partial F(x,u(x))}{\partial x_{j}}
	+\frac{\partial F(x,u(x))}{\partial t}\partial_{x_{j}}u(x),\mbox{ for a.e. }x\in \Omega.
	$$
\end{lemma}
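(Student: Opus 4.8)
The plan is to establish this as the chain rule for the composition of a Lipschitz function with a Sobolev function, in the slightly nonstandard form where the outer function also depends on $x$; I would prove it by a double mollification together with the ACL (absolute continuity on lines) description of $W^{1,1}_{loc}$. The statement being local, I would fix an open set $\Omega'$ with $\overline{\Omega'}\subset\Omega$ and prove $f\in W^{1,1}(\Omega')$ with the asserted weak derivatives. Extending $F$ to a Lipschitz function on $\R^{N+1}$ (e.g.\ evenly in the second variable) if necessary, I may assume $F$ Lipschitz on $\R^{N+1}$ with constant $L$. Since $F\ge 0$ is Lipschitz, $0\le F(x,t)\le F(x_0,0)+L(\mathrm{diam}\,\Omega+|t|)$, so $|f(x)|\le C(1+|u(x)|)\in L^1(\Omega')$; and since $|\partial F/\partial x_j|\le L$ and $|\partial F/\partial t|\le L$ wherever these exist (a.e.\ on $\R^{N+1}$, by Rademacher's theorem), the candidate expression $\partial_{x_j}F(x,u)+\partial_t F(x,u)\,\partial_{x_j}u$ is dominated by $L(1+|\partial_{x_j}u|)\in L^1(\Omega')$. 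Everything then reduces to identifying this expression with the distributional derivative $\partial_{x_j}f$ on $\Omega'$.

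For that I would take standard mollifications $F_k=F*\rho_{1/k}\in C^\infty$ and $u_\e=u*\rho_\e\in C^\infty(\overline{\Omega'})$, so that $\|F_k-F\|_\infty\le L/k$, $|\nabla F_k|\le L$, $\nabla F_k\to\nabla F$ a.e.\ on $\R^{N+1}$, while $u_\e\to u$ and $\partial_{x_j}u_\e\to\partial_{x_j}u$ in $L^1(\Omega')$ and (up to a subsequence) a.e. For the smooth pair $(F_k,u_\e)$ the ordinary chain rule gives, for each $\psi\in C^\infty_c(\Omega')$,
\[
\int_{\Omega'}F_k(x,u_\e)\,\partial_{x_j}\psi\,dx=-\int_{\Omega'}\Big(\tfrac{\partial F_k}{\partial x_j}(x,u_\e)+\tfrac{\partial F_k}{\partial t}(x,u_\e)\,\partial_{x_j}u_\e\Big)\psi\,dx .
\]
Letting $k\to\infty$ with $\e$ fixed, the left side converges by the uniform bound, and the right side by dominated convergence once one knows $\nabla F_k(x,u_\e(x))\to\nabla F(x,u_\e(x))$ for a.e.\ $x\in\Omega'$; this I would obtain from the fact that the graph of the $C^1$ function $u_\e$ is a $C^1$ hypersurface $\Sigma_\e\subset\R^{N+1}$ on which $F$ is differentiable $\H^N$–a.e., and $\H^N|_{\Sigma_\e}$ is equivalent to $\mathcal L^N|_{\Omega'}$ under the parametrization $x\mapsto(x,u_\e(x))$. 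This yields the weak identity for $(F,u_\e)$. Finally I would let $\e\to 0$: the left side converges since $F$ is globally Lipschitz and $u_\e\to u$ in $L^1(\Omega')$, and on the right side I would pass to the limit slicewise, on almost every line parallel to the $x_j$–axis — where $u$ is absolutely continuous, so $t\mapsto F((\widehat x,t),u(\widehat x,t))$ is a Lipschitz map composed with an absolutely continuous curve, hence absolutely continuous with the chain rule holding a.e.\ in $t$ — and then reassemble by Fubini. This gives $f\in W^{1,1}(\Omega')$ with $\partial_{x_j}f=\partial_{x_j}F(x,u)+\partial_t F(x,u)\,\partial_{x_j}u$, and since $\Omega'$ is arbitrary, $f\in W^{1,1}_{loc}(\Omega)$.

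The main obstacle is exactly these two limit passages: the graph of the inner function is $\mathcal L^{N+1}$–negligible, so one cannot simply invoke a.e.\ convergence of $\nabla F_k$ on $\R^{N+1}$ to evaluate along it. For a $C^1$ inner function this is circumvented by applying Rademacher's theorem on the graph hypersurface as above, and the remaining $\e\to 0$ step is then safest to run slicewise via the ACL characterization of $W^{1,1}_{loc}$, using that in one variable a Lipschitz function composed with an absolutely continuous curve is absolutely continuous and obeys the chain rule a.e. I note that in every application of this lemma in the paper the function $F(x,\cdot)$ is in fact of class $C^1$ (so $\partial F/\partial t$ is continuous and the limits are immediate), the dependence on the first variable being controlled by \eqref{phi.min4}.
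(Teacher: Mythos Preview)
Your approach is genuinely different from the paper's. The paper does not mollify at all: for a test function $\varphi\in\mathcal D(\Omega)$ it writes
\[
-\int_\Omega F(x,u(x))\,\partial_{x_j}\varphi\,dx
=\lim_{h\to 0}\int_\Omega\frac{F(x+he_j,u(x+he_j))-F(x,u(x))}{h}\,\varphi\,dx,
\]
inserts $\pm F(x,u(x+he_j))$ in the numerator, and obtains two bounded difference quotients $Q_1(x,h)$ and $Q_2(x,h)$; it then asserts that along a sequence $h\to 0$ these converge in $\sigma^\ast(L^\infty,L^1)$ to $\partial_{x_j}F(x,u(x))$ and $\partial_tF(x,u(x))$ respectively, and pairs them with $\varphi$ and with $\frac{u(\cdot+he_j)-u}{h}\varphi\to(\partial_{x_j}u)\varphi$ in $L^1$. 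So the paper's route is a direct difference-quotient argument, whereas yours is mollification of both $F$ and $u$ followed by the ACL slicewise passage.

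What your route buys is a much more honest accounting of the one genuinely delicate point, which you correctly isolate: the graph $\{(x,u(x))\}$ is $\mathcal L^{N+1}$-null, so neither the a.e.\ convergence $\nabla F_k\to\nabla F$ nor Rademacher's theorem in $\R^{N+1}$ lets one evaluate the separate partials $\partial_{x_j}F$ and $\partial_tF$ along it without further argument. The paper's proof runs into exactly the same difficulty---Banach--Alaoglu furnishes \emph{some} weak-$\ast$ limits of $Q_1,Q_2$, but identifying them as $\partial_{x_j}F(x,u(x))$ and $\partial_tF(x,u(x))$ presupposes that these partials exist at $(x,u(x))$ for a.e.\ $x$, which is precisely the graph issue. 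Your Rademacher-on-the-graph idea yields the tangential derivative of $F$ along $\Sigma_\e$ (hence $\partial_{x_j}[F(x,u_\e(x))]$), but not, in general, its decomposition into the two summands. Your closing remark is therefore the right resolution for this paper: in every use of the lemma here $F(x,\cdot)$ is $C^1$ (a power of $(\phi_{min}^{\ast\ast})_\ast$), so $\partial_tF$ is continuous, the decomposition is unambiguous, and both arguments go through cleanly.
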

\begin{proof}
	Let $\varphi\in D(\Omega)$ and let $\{e_{j}\}_{j=1}^{N}$ be the standard basis in $\mathbb{R}^{N}$. We can write
	\begin{equation*}
	\begin{array}{clc}\displaystyle
	&-\displaystyle\int_{\Omega}F(x,u(x))\partial_{x_{j}}\varphi(x)dx\\
	&=-\displaystyle\lim_{h\to0}\int_{\Omega}F(x,u(x))\frac{\varphi(x)-\varphi(x-he_{j})}{h}dx\\
	&=\displaystyle\lim_{h\to0}\int_{\Omega}\frac{F(x+he_{j},u(x+he_{j}))-F(x,u(x))}{h}\varphi(x)dx\\
	&=\displaystyle\lim_{h\to0}\int_{\Omega}\frac{F(x+he_{j},u(x+he_{j}))-F(x,u(x+he_{j}))}{h}\varphi(x)dx\\
	&\;\;\;+\displaystyle\lim_{h\to0}\int_{\Omega}\frac{F(x,u(x+he_{j}))-F(x,u(x))}{h}\varphi(x)dx\\
	&=\displaystyle\lim_{h\to0}\int_{\Omega}Q_{1}(x,h)\varphi(x)dx\\
	&\;\;\;+\displaystyle\lim_{h\to0}\int_{\Omega}Q_{2}(x,h)\frac{u(x+he_{j})-u(x)}{h}\varphi(x)dx,
	\end{array}
	\end{equation*}
	where
	\begin{equation*}
	Q_{1}(x,h)=\left\{
	\begin{array}{clc}\displaystyle
	\frac{F(x+he_{j},u(x+he_{j}))-F(x,u(x+he_{j}))}{h}&\mbox{ if }h\neq0,&\\\displaystyle
	\frac{\partial F(x,u(x))}{\partial x_{j}}&\mbox{ if }h=0&
	\end{array}
	\right.
	\end{equation*}
	and
	\begin{equation*}
	Q_{2}(x,h)=\left\{
	\begin{array}{clc}\displaystyle
	\frac{F(x,u(x+he_{j}))-F(x,u(x))}{u(x+he_{j})-u(x)}&\mbox{ if }u(x+he_{j})\neq u(x),&\\
	\displaystyle\frac{\partial F(x,u(x))}{\partial t}&\mbox{ otherwise}.
	\end{array}
	\right.
	\end{equation*}
	Since $F(\cdot,\cdot)$ is Lipschitz continuous, there exist two constants $k_{1}$ and $k_{2}>0$ independent of $h$, such that
	$\|Q_{1}(\cdot,h)\|_{\infty}\leq k_{1}$ and $\|Q_{2}(\cdot,h)\|_{\infty}\leq k_{2}$. Thus, for some sequence of values of $h$ tending to zero, $Q_{1}(\cdot,h)$ converges to
	$\frac{\partial F(x,u(x))}{\partial x_{j}}$
	and
	$Q_{2}(\cdot,h)$ converges to $\frac{\partial F(x,u(x))}{\partial t}$
	both in $L^{\infty}(\Omega)$ for the weak-star topology $\sigma^\ast(L^{\infty}(\Omega),L^{1}(\Omega))$.
	On the other hand, since $u\in W^{1,1}(supp(\varphi))$ we have
	$$
	\lim_{h\to0}\int_{supp(\varphi)}\frac{u(x+he_{j})-u(x)}{h}\varphi(x)dx=\int_{supp(\varphi)}\partial_{j}u(x)\varphi(x)dx.
	$$
	It follows that
	$$
	-\int_{\Omega}F(x,u(x))\partial_{x_{j}}\varphi(x)dx=\int_{\Omega}\frac{\partial F(x,u(x))}{\partial x_{j}}\varphi(x)dx+\int_{\Omega}\frac{\partial F(x,u(x))}{\partial t}\partial_{x_{j}}u(x)\varphi(x)dx,
	$$
	which completes the proof of the lemma.
\end{proof}
\begin{lemma}\label{lem.impo6}
Let $\Omega$ be an open subset of $\mathbb{R}^{N}$. Let $A$ and $\phi$ be two Musielak-Orlicz functions with $\phi$ is locally integrable, derivable with respect to its second argument and $\phi\ll A$. Then, $\varphi(\cdot,s)\in L_{\phi^{\ast}}(\Omega)$ for every $s\in L_{A}(\Omega)$, where
$\varphi(x,s)=\frac{\partial\phi(x,s)}{\partial s}$.
\end{lemma}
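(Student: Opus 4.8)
The plan is to reduce the claim to a single pointwise estimate and then integrate it, using the hypotheses $s\in L_A(\Omega)$ and $\phi\ll A$. Since $\varphi(\cdot,s(\cdot))\in L_{\phi^{\ast}}(\Omega)$ only requires $\int_{\Omega}\phi^{\ast}(x,\varphi(x,s(x))/\lambda)\,dx<\infty$ for \emph{some} $\lambda>0$, I would aim at the sharper statement that $\int_{\Omega}\phi^{\ast}(x,\varphi(x,s(x)))\,dx<\infty$ (so $\lambda=1$ works), which follows once we have
\[
\phi^{\ast}(x,\varphi(x,t))\leq\phi(x,2|t|)\qquad\text{for a.e. }x\in\Omega\text{ and all }t\in\mathbb{R}.
\]

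To prove this pointwise bound I would first note that, $\phi(x,\cdot)$ being extended as an even function, $\varphi(x,\cdot)=\partial_s\phi(x,\cdot)$ is odd and $\phi^{\ast}(x,\cdot)$ is even, so $\phi^{\ast}(x,\varphi(x,t))=\phi^{\ast}(x,\varphi(x,|t|))$ and it suffices to treat $t\geq0$. For such $t$ the supremum in \eqref{compl1}, namely $\phi^{\ast}(x,\varphi(x,t))=\sup_{\tau\geq0}\{\varphi(x,t)\tau-\phi(x,\tau)\}$, is attained at $\tau=t$: the map $\tau\mapsto\varphi(x,t)\tau-\phi(x,\tau)$ is concave with $\tau$-derivative $\varphi(x,t)-\varphi(x,\tau)$, which is nonincreasing (convexity of $\phi(x,\cdot)$) and vanishes at $\tau=t$. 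Hence Young's inequality is an equality there and
\[
\phi^{\ast}(x,\varphi(x,t))=t\,\varphi(x,t)-\phi(x,t)\leq t\,\varphi(x,t)\leq\phi(x,2t),
\]
the last step being the right-hand inequality in \eqref{prop3}. (If one prefers to bypass the equality case, the slightly weaker inequality $\varphi(x,t)\leq 2\,\phi^{\ast-1}(x,\phi(x,2t))$ can be extracted from \eqref{prop2} and \eqref{prop3}, and then \eqref{prop1} gives the same conclusion with $\lambda=2$.)

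It remains to integrate. Since $s\in L_A(\Omega)$, fix $\mu>0$ with $\int_{\Omega}A(x,|s(x)|/\mu)\,dx<\infty$. Because $\phi$ is locally integrable and $\phi\ll A$, the observation following \eqref{grows}, applied with $c=1/(2\mu)$, yields a nonnegative $h\in L^{1}(\Omega)$ with $\phi(x,\rho)\leq A(x,\rho/(2\mu))+h(x)$ for all $\rho$ and a.e. $x$; taking $\rho=2|s(x)|$ and combining with the pointwise bound gives
\[
\int_{\Omega}\phi^{\ast}(x,\varphi(x,s(x)))\,dx\leq\int_{\Omega}\phi(x,2|s(x)|)\,dx\leq\int_{\Omega}A\Big(x,\frac{|s(x)|}{\mu}\Big)dx+\int_{\Omega}h(x)\,dx<\infty,
\]
so $\varphi(\cdot,s(\cdot))\in L_{\phi^{\ast}}(\Omega)$.

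I expect the only delicate point to be the middle step, i.e. the identity $\phi^{\ast}(x,\varphi(x,t))=t\varphi(x,t)-\phi(x,t)$, which is where the differentiability of $\phi$ in its second argument is genuinely used; it rests on the elementary fact that the supremum defining $\phi^{\ast}(x,\cdot)$ at the value $\varphi(x,t)$ is attained exactly at $\tau=t$. Everything else is routine manipulation of \eqref{prop1}--\eqref{prop3}, Young's inequality, and the domination $\phi\ll A$.
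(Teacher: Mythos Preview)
Your proof is correct and follows essentially the same route as the paper: both reduce to the pointwise bound $\phi^{\ast}(x,\varphi(x,t))\leq t\varphi(x,t)\leq\phi(x,2|t|)$ via \eqref{prop3}, and then integrate using $\phi\ll A$ together with local integrability. The only cosmetic difference is that the paper obtains $\phi^{\ast}(x,\varphi(x,s))\leq |s|\varphi(x,|s|)$ from the integral representation $\phi^{\ast}(x,r)=\int_{0}^{r}\varphi^{-1}(x,\tau)\,d\tau$ (bounding $\varphi^{-1}(x,\tau)\leq|s|$ on $[0,\varphi(x,|s|)]$), whereas you reach the same bound through the equality case of Young's inequality; and the paper takes $\mu=\|s\|_{A}$ where you allow any admissible $\mu$.
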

\begin{proof} Let $s\in L_{A}(\Omega)$.
	By \eqref{prop3}, we can write
$$
\begin{array}{lll}\displaystyle
\int_{\Omega}\phi^{\ast}(x,\varphi(x,s))dx
&=\displaystyle\int_{\Omega}\int_{0}^{\varphi(x,s)}
\varphi^{-1}(x,\tau)d\tau dx\\
&\leq\displaystyle\int_{\Omega}|s|\varphi(x,|s|)dx\\
&\leq\displaystyle\int_{\Omega}\phi(x,2|s|)dx.
\end{array}
$$
It's obvious that if $s=0$ then $\varphi(\cdot,s)\in L_{\phi^{\ast}}(\Omega)$. Assume that $s\neq0$. Since $\phi$ is
locally integrable and $\phi\ll A$, there exists a nonnegative function $h\in L^{1}(\Omega)$ such that
	$\phi(x,2|s|)\leq A\Big(x,\frac{|s|}{\|s\|_{A}}\Big)+h(x)$ for a.e. $x\in\Omega$.
	Thus,
	$$
	\int_{\Omega}\phi(x,2|s|)dx\leq\int_{\Omega}A\Big(x,\frac{|s|}{\|s\|_{A}}\Big)dx+\int_{\Omega}h(x)dx<\infty.
	$$
	Hence, $\varphi(\cdot,s)\in L_{\phi^{\ast}}(\Omega)$.
\end{proof}


\end{document}